\let\oldlist=\list
\newlength\oldparskip
\def\list#1#2{\oldparskip=\parskip\parskip=0.5\baselineskip
\oldlist{#1}{#2}\parskip=0.5\baselineskip}
\let\oldendlist=\enditemize
\def\enditemize{\oldendlist\vspace*{-0.5\oldparskip}}
\newtheorem{theorem}{Theorem}[section]
\newtheorem{lemma}[theorem]{Lemma}
\newtheorem{proposition}[theorem]{Proposition}
\newtheorem{definition}[theorem]{Definition}
\newtheorem{example}[theorem]{Example}
\newtheorem{examples}[theorem]{Examples}
\newtheorem{observation}[theorem]{Observation}
\newcounter{step}[theorem]
\newenvironment{step}{\par\refstepcounter{step}\textbf{step \thestep: }\begingroup\it}{\endgroup\par}
\DeclareMathOperator{\Prob}{Prob}
\def\interior#1{\overset{\circ}{#1}}
\newcommand\norm[1]{\left\lVert #1\right\rVert}
\newcommand\abs[1]{\left\lvert #1\right\rvert}
\newcommand\inprod[2]{\left\langle #1, #2\right\rangle}
\DeclareMathOperator{\IC}{\mathbb{C}}
\DeclareMathOperator{\IR}{\mathbb{R}}
\DeclareMathOperator{\IRpos}{\mathbb{R}_{+}^{\ast}}
\DeclareMathOperator{\IQ}{\mathbb{Q}}
\DeclareMathOperator{\IZ}{\mathbb{Z}}
\DeclareMathOperator{\IN}{\mathbb{N}}
\DeclareMathOperator{\F}{F}
\DeclareMathOperator{\cH}{\mathcal{H}}
\DeclareMathOperator{\cU}{\mathcal{U}}
\DeclareMathOperator{\Circle}{S}
\def\MatrixGroup#1_#2#3{\mathchoice{#1_{#2}\hspace*{-0.3mm}#3}{#1_{#2}\hspace*{-0.4mm}#3}{#1_{#2}\hspace*{-0.4mm}#3\hspace*{0.3mm}}{#1_{#2}\hspace*{-0.4mm}#3\hspace*{0.3mm}}%
  \def\tempa{#3}%
  \def\tempb{\IQ}\ifx\tempa\tempb\else%
  \def\tempb{\IZ}\ifx\tempa\tempb\else%
  \def\tempb{R}\ifx\tempa\tempb\else%
  \def\tempb{(\F_2[X])}\ifx\tempa\tempb\else%
  \def\tempb{(\F_2)}\ifx\tempa\tempb\else%
  \def\tempb{\IC}\ifx\tempa\tempb\else%
  \fout\fi\fi\fi\fi\fi\fi%
}
\def\quot{\@ifnextchar[{\sdp@quotleftA}{\sdp@quotright}}
\def\sdp@quotleftA[#1]#2{\@ifnextchar[{\sdp@quotboth[{#1}]{#2}}{\sdp@quotleft[{#1}]{#2}}}
\def\sdp@quotright#1[#2]{#1/#2}
\def\sdp@quotboth[#1]#2[#3]{#1\backslash #2 / #3}
\def\sdp@quotleft[#1]#2{#1\backslash #2}
\DeclareMathOperator{\Aut}{Aut}
\def\Autmp(#1,#2){\Aut_{#2}(#1)}
\def\Auttp(#1,#2){\Aut_{#2}(#1)}
\DeclareMathOperator{\Out}{Out}
\def\Outmp(#1,#2){\Out_{#2}(#1)}
\def\Outtp(#1,#2){\Out_{#2}(#1)}
\def\Rel(#1\actson #2){\mathcal{R}(#1\actson #2)}
\def\fullg(#1){\left[#1\right]}
\def\fullpg(#1){\left[\left[#1\right]\right]}
\DeclareMathOperator{\Lp}{L}
\DeclareMathOperator{\cA}{\mathcal{A}}
\def\vnInterB(#1){\zH_{#1}}
\DeclareMathOperator{\actson}{\curvearrowright}
\DeclareMathOperator{\supp}{supp}
\def\D{d}
\newcommand\restrict[1]{\vert_{#1}}
\def\zH{\@ifnextchar_{\zH@}{\@undefined}}
\DeclareMathOperator{\zH@}{H}
\def\smallscripts#1{\@ifnextchar_{\smallscripts@{#1}}{\@undefined}}
\def\smallscripts@#1{\@ifnextchar_{\smallscripts@subP{#1}}{\@ifnextchar^{\smallscripts@superP{#1}}{\smallscripts@none{#1}}}}
\def\smallscripts@subP#1_#2{\@ifnextchar^{\smallscripts@subsuperP{#1}{#2}}{\smallscripts@sub{#1}{#2}}}
\def\smallscripts@subsuperP#1#2^#3{\smallscripts@both{#1}{#2}{#3}}
\def\smallscripts@superP#1^#2{\@ifnextchar_{\smallscripts@supersubP{#1}{#2}}{\smallscripts@super{#1}{#2}}}
\def\smallscripts@supersubP#1#2_#3{\smallscripts@both{#1}{#3}{#2}}
\def\smallscripts@none#1{{\displaystyle #1} }
\def\smallscripts@sub#1#2{{\displaystyle #1}_{\scriptscriptstyle #2}}
\def\smallscripts@super#1#2{{\displaystyle #1}^{\scriptscriptstyle #2}}
\def\smallscripts@both#1#2#3{{\displaystyle #1}_{\scriptscriptstyle #2}^{\scriptscriptstyle #3}}
\newcommand\chara{\smallscripts{\chi}}
\begin{document}
\title[Permanence properties of property A and coarse embeddability]{Permanence properties of property A and coarse embeddability for locally compact groups}
\newbox\sdptempbox
\def\hang#1{\setbox\sdptempbox=\hbox{#1}\hspace*{-\wd\sdptempbox}\box\sdptempbox}
\author{Steven Deprez$^{1}$ and Kang Li$^{2}$}
\thanks{\hang{$^{1}$ }corresponding author, University of Copenhagen, sdeprez@math.ku.dk}
\thanks{Supported by ERC Advanced Grant no. OAFPG 247321}
\thanks{\hang{$^{2}$ }University of Copenhagen, kang.li@math.ku.dk}
\thanks{\hang{$^{1,2}$ }Supported by the Danish National Research Foundation through the Centre for Symmetry and Deformation}
\thanks{(DNRF92).} 

\def\lcsc{l.c.s.c\wordspace}
\def\wordspace{\futurelet\sdptoken\dowordspace}
\def\dowordspace{\ifcat\sdptoken a.\@ \else\if\sdptoken.\else\if\sdptoken,\else\if\sdptoken)\else\undefined\fi\fi\fi\fi}

\begin{abstract}
  If $\Gamma\subset G$ is a lattice in a locally compact second countable group $G$, then we show that $G$ has property A (respectively is coarsely embeddable into Hilbert space)
  if and only if $\Gamma$ has property A (respectively is coarsely embeddable into Hilbert space).
  Moreover, we show three interesting generalizations of this result.
  If $H\subset G$ is a closed subgroup of $G$ that is co-amenable in $G$,
  and if $H$ has property A (respectively, is coarsely embeddable into Hilbert space),
  then we show that $G$ has property A (respectively, is coarsely embeddable into Hilbert space).
  We also show that an extension of property A groups still has property A.
  On the coarse embeddability side, we show that if $e\rightarrow H\rightarrow G\rightarrow Q\rightarrow e$ is a short exact sequence, and if
  either $H$ is coarsely embeddable into Hilbert space and $Q$ has property A, or $H$ is compact and $Q$ is coarsely embeddable into Hilbert space,
  then $G$ is coarsely embeddable into Hilbert space.
  We extend the theory of measure equivalence to locally compact non-unimodular groups.
  In a natural way, we can also define measure equivalence subgroups.
  We show that property A and uniform embeddability into Hilbert space pass to measure equivalence subgroups.
  Using the same techniques, we show that also the Haagerup property,
  weak amenability and the weak Haagerup property pass to measure equivalence subgroups.
\end{abstract}

\maketitle
\section*{Introduction and statement of the main results}
In \cite{G93}, Gromov introduced the notion of uniform embeddability of metric spaces. Nowadays, this is often called coarse embeddability, and
we stick to the more modern terminology in this paper. Gromov suggested that a discrete finitely generated group $\Gamma$
that coarsely embeds into Hilbert space, would satisfy the Novikov conjecture.
It was later shown that this is indeed the case: in \cite{Yu00}, Yu
showed that it is true for all discrete groups that are uniformly embeddable into Hilbert space, and whose classifying space $B\Gamma$ is a finite CW-complex.
In the same paper, he introduced a
condition on $\Gamma$, which he called property A, that ensures coarse embeddability of $\Gamma$ into a Hilbert space.
Higson and Roe
showed that $\Gamma$ has property A if and only if $\Gamma$ has a topologically amenable action on a compact Hausdorff space \cite{HR00}. Ozawa
showed that this is equivalent to exactness \cite{Oza00}.
In \cite{H00}, Higson showed that all discrete groups with property A satisfy the Novikov conjecture, even when the classifying space
is not a finite CW-compex. Skandalis, Tu and Yu \cite{STY02} could then show that
all discrete groups that coarsely embed into Hilbert space, do indeed satisfy the Novikov conjecture. In fact,
the results by Higson and Skandalis, Tu and Yu are slightly stronger: they showed that for all discrete groups with property A (respectively that are coarsely embeddable into Hilbert space),
the Baum-Connes assembly map with coefficients is split-injective. Baum, Connes and Higson showed in \cite{BCH94} that this implies the Novikov conjecture.

Similar results also hold for locally compact second countable (from now on \lcsc) groups. In \cite{R05}, Roe extended the definition of
property A to proper metric spaces with bounded geometry. Every \lcsc group has a proper compatible left-invariant metric, and has bounded geometry
with respect to this metric (see \cite{S74, HP06}).
So Roe's definition applies in particular to \lcsc groups.
In a previous paper \cite{DL:A}, we showed that a \lcsc group $G$ has property A in this sense if and only if
$G$ admits a topologically amenable action on a compact Hausdorff space. Chabert, Echterhoff and Oyono-Oyono showed that the Baum-Connes assembly map
with coefficients is split-injective for every \lcsc group that admits a topologically amenable action on a compact Hausdorff space, see \cite{CEO04}.
In \cite{DL:A}, we
extended this result and showed that coarse embeddability into Hilbert space implies split-injectivity of the Baum-Connes assembly map with coefficients.

In the present paper, we study permanence properties of property A and coarse embeddability into Hilbert space for \lcsc groups.
For brevity, we will drop the ``into Hilbert space'',
so the phrase ``$G$ is coarsely embeddable'' will mean that $G$ is coarsely embeddable into Hilbert space. For the convenience of the reader,
we review the relevant definitions in section \ref{sect:defs}. It is clear that property A and coarse embeddability pass to closed
subgroups of \lcsc groups. We will be interested in the other direction: if $G$ is a \lcsc group and a closed subgroup
$H\subset G$ has property A (resp.\@ is coarsely embeddable), under which conditions on the inclusion $H\subset G$ can we conclude that $G$
has property A (resp.\@ is coarsely embeddable)? The first of our results of this type is that this holds when $H$ is a lattice in $G$.
This result can be extended in a number of ways, and we obtain the following result.
\begin{theorem}
  \label{thm:main:ex}
  Let $G,H$ be \lcsc groups. In each of the following situations, if $H$ has property A (resp.\@ is coarsely embeddable),
  then $G$ has property A (resp.\@ is coarsely embeddable).
  \begin{enumerate}
  \item $H\subset G$ is a lattice
  \item $H\subset G$ is a closed subgroup with finite covolume
  \item $H\subset G$ is a closed co-amenable subgroup, in the sense of Eymard \cite{Eymard:Means}
  \item $H$ is a closed normal subgroup of $G$ and the quotient group $G/H$ has property A
  \item $H=G/Q$ where $Q\subset G$ is a compact normal subgroup
  \item $G$ is a measure equivalence subgroup of $H$. We give a careful definition of this notion in definition \ref{def:ME}, inspired by \cite{G93}
  \end{enumerate}
\end{theorem}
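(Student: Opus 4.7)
The plan is to consolidate the six situations into a small number of core arguments by noting substantial overlaps between them, then to attack the remaining cases with techniques tailored to the structure involved.

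\textbf{Reductions.} First I would observe that (1) is a special case of (2), and that (2) is a special case of (3): if $H\subset G$ is a closed subgroup of finite covolume, then integration against the normalized $G$-invariant measure on $G/H$ produces a $G$-invariant mean on $L^{\infty}(G/H)$, which is precisely Eymard's co-amenability condition. I would also point out that (1) and (2) are special cases of (6), since a finite-covolume closed inclusion $H\subset G$ gives a canonical measure-equivalence coupling whose underlying space is $G$ itself, with $H$ acting by left translation and $G$ by right translation. Hence only (3), (4), (5), and (6) have to be established independently.

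\textbf{The easy and classical cases.} For (5), the quotient map $G\to G/Q$ is proper and has compact (hence uniformly bounded) fibres, so it is a coarse equivalence; since property A and coarse embeddability are coarse invariants, the case is immediate. For (4), I would use the characterization of property A for \lcsc groups via topologically amenable actions on compact Hausdorff spaces from our previous paper \cite{DL:A}: given a topologically amenable action $H\actson X$ and a topologically amenable action $G/H\actson Y$, a suitable induced/combined action of $G$ on a compact space built from $X$ and $Y$ is again topologically amenable. This mirrors the classical permanence of amenability under extensions and should only require routine bookkeeping once the induced-action picture is set up correctly.

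\textbf{Case (3), co-amenable inclusions.} Here the Eymard mean $m$ on $L^{\infty}(G/H)$ would be used to average data on $H$ up to data on $G$. More precisely, starting from a sequence of positive-definite kernels (or Hilbert-valued coarse embedding maps) on $H$ witnessing the relevant property, I would integrate over $G/H$ with respect to $m$ to obtain $G$-equivariant analogues. The substantive work is to verify that continuity, local boundedness, and the appropriate vanishing-at-infinity or slow-variation conditions survive this averaging; the non-uniqueness and non-$\sigma$-additivity of the mean force one to work directly with the Banach-space limit rather than with pointwise limits.

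\textbf{Case (6), the main obstacle.} This is the heart of the theorem and, I expect, the main obstacle. One must first extend the theory of measure equivalence to non-unimodular \lcsc groups, as indicated in Definition \ref{def:ME}, and in particular define what a measure-equivalence subgroup is in terms of a coupling $(\Omega,m)$ carrying commuting measure-preserving actions of $H$ and $G$ with fundamental domains of the correct finiteness type. Once this framework is in place, the strategy is to push a property A (or coarse embedding) structure from $H$ to $G$ by integrating the relevant positive-definite kernels over an $H$-fundamental domain, using the cocycles induced by the coupling to translate between the $H$- and $G$-sides. The technical difficulties are twofold: first, non-unimodularity forces careful tracking of modular Radon--Nikodym factors arising from both actions, and second, one must verify that the integrated kernels have the required continuity and propagation/decay properties on $G$. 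Once case (6) is established, cases (1) and (2) become automatic via the canonical coupling mentioned in the reductions, so the whole theorem will be complete.
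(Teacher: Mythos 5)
Your reductions are sound: (1)$\subset$(2)$\subset$(3) is correct, and (1),(2) are indeed also instances of (6) via the coupling $\Omega=G$. Case (5) as a coarse equivalence is fine. But there is a genuine gap in case (4). The statement to be proved there is two-sided: not only ``$H$ has property A and $G/H$ has property A $\Rightarrow$ $G$ has property A'', but also ``$H$ is \emph{coarsely embeddable} and $G/H$ has property A $\Rightarrow$ $G$ is coarsely embeddable''. Your proposed argument via topologically amenable actions on compact Hausdorff spaces only sees the first half: coarse embeddability of $H$ is not characterized by the existence of a topologically amenable action, so there is no ``amenable action of $H$'' to induce up, and the method produces nothing in the coarse embeddability branch. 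The paper handles both branches at once by a single transfer theorem (its Theorem \ref{thm:main:intro}): the quotient cocycle $\omega(g,x)=s(gx)^{-1}gs(x):G\times G/H\to H$ built from a regular Borel section is proper with respect to the compact subsets of $G/H$, the pair $(G,G/H)$ has property A with respect to that family precisely because the group $G/H$ does, and then a positive definite kernel on $H$ witnessing property A or coarse embeddability is pulled back through the cocycle against the $L^2(G/H)$ vectors. You would need this (or an equivalent) mechanism; the compact-space picture cannot replace it.

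A second, related weakness is that the two cases you correctly identify as substantive, (3) and (6), are left at the level of ``integrate and check that the conditions survive'', and the proposed integration in (3) is against a finitely additive invariant mean on $L^\infty(G/H)$. This is the wrong device for the decay conditions: a mean of kernels each supported in a tube (or each vanishing at infinity) need not retain any support or decay control, because the mean does not see where in $H$ the section-cocycle $\omega(g,\cdot)$ lands. The paper instead converts co-amenability into almost invariant unit vectors $\xi\in L^2(G/H)$, truncates $\xi$ to a set $A$ in the large family so that properness of $\omega$ confines $\omega(g,x)$ to a fixed compact $L\subset H$ for $g\in K$, $x\in A\cap g^{-1}A$, and defines $k(g,h)=\int \overline{\xi_g(x)}\xi_h(x)\,k_0(\omega(g,g^{-1}x),\omega(h,h^{-1}x))\,d\mu(x)$; positive definiteness, the $\varepsilon$-estimate on $K$, and precompactness of $\{g^{-1}h\mid |k(g,h)|>\delta\}$ all follow from properness of $\omega$ with respect to the same family $\cA$ that supports the vectors. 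Exactly the same computation disposes of (6) once one proves that the cocycle associated to a measure correspondence is proper (the paper's Theorem \ref{thm:MC}, which is itself nontrivial and is not addressed in your sketch). So the missing idea is the single unifying statement ``proper cocycle $+$ property A of the pair $(G,X)$ with respect to the same large family transfers property A and coarse embeddability from $H$ to $G$''; with that in hand all six cases, including the coarse embeddability half of (4), follow uniformly.
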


All of the above statements are special cases of a more general result. The crucial ingredients of this result are \emph{proper cocycles}
(inspired by Jolissaint \cite{Jolissaint:propercocycle}) and property A for pairs
(inspired by amenable pairs \cite{Eymard:Means, Greenleaf:AmenableActions, Zimmer:AmenablePairs, Jolissaint:AmenablePairs}).
We explain both notions below.

In \cite{Jolissaint:propercocycle}, Jolissaint introduced proper cocycles in order to prove permanence
properties of the Haagerup property for \lcsc groups.
Recently, he used proper cocycles to derive similar permanence properties of weak amenability and of the weak
Haagerup property for \lcsc groups \cite{Jolissaint:permanence}.
We introduce a slightly weaker notion of proper cocycle. 
\begin{definition}[inspired by \cite{Jolissaint:propercocycle}]
  Let $G,H$ be \lcsc groups and let $G\actson (X,\mu)$ be a non-singular Borel action on a standard probability
  space. A Borel cocycle $\omega:G\times X\rightarrow H$ is said
  to be
  \begin{itemize}
  \item \emph{proper with respect to a family $\cA$ of Borel sets in $X$} if
    \begin{enumerate}
    \item for every compact subset $K\subset G$ and every $A,B\in\cA$, we find a precompact set
      $L(K,A,B)\subset H$ such that, for every $g\in K$ we get that
      \[\omega(g,x)\in L(K,A,B) \text{ for almost all }x\in A\cap g^{-1}B\]
    \item for every compact subset $L\subset H$ and every $A,B\in\cA$, we get that the set $K(L,A,B)$
      of all $g\in G$ such that
      \[\mu\{x\in X\mid x\in A, gx\in B, \omega(g,x)\in L\}>0,\]
      is precompact in $G$.
    \end{enumerate}
  \item \emph{proper} if $\omega$ is proper with respect to some family $\cA$ of Borel sets in $X$ such that for every $\varepsilon>0$ there is a set $A\in\cA$ with $\mu(X\setminus A)<\varepsilon$.
  \end{itemize}
\end{definition}
Our notion of proper cocycle has a few advantages over Jolissaint's notion. First of all, it is more natural
because it is invariant under cohomology, while Jolissaint's notion is not. In section \ref{sect:propercocycle},
we give an example of a cocycle $\omega$ that is cohomologous to a cocycle that is proper in Jolissaint's sense,
but $\omega$ itself is not proper in Jolissaint's sense (see example \ref{ex:non-J}).
More importantly, we have more examples. In theorem \ref{thm:MC},
we show that the cocycles coming from measure equivalence subgroups are proper in our sense.
Every cocycle that is proper in Jolissaint's sense is also proper in our sense, see proposition \ref{prop:J-our}.
So example \ref{ex:non-J} gives an example of a cocycle that is proper in our sense but not in Jolissaint's sense.
Jolissaint's main result about proper cocycles is \cite[theorem 1.4]{Jolissaint:permanence}.
With theorem \ref{thm:amen:intro} below, we show that this result also holds for our weaker notion of proper cocycle.

\def\niets{
 In full
generality, these cocycles are not proper in Jolissaint's sense, as is indicated by the following very simple
example. Consider the action of $\IR$ on $\Circle^1$ by rotation. Assume that the stabilizer of any point in
$\Circle^1$ is $\IZ$. Let $\varphi:S^1\rightarrow \IR$ be any unbounded Borel function. Then the cocycle
$\omega:\IR\times \Circle^1\rightarrow \IR$ that is defined by $\omega(r,x)=\varphi(rx)r\varphi(x)^{-1}$
is proper in our sense, but not in Jolissaint's sense. This cocycle comes from a measure equivalence between
$\IR$ and itself.
 However, every cocycle that is proper in Jolissaint's sense is also proper in our sense,
and the Jolissaint's results that are based on the existence of proper cocycles remain true, see theorem
\ref{thm:amen:intro}.}

From now on, when we say that $G\actson (X,\mu)$ is a non-singular action, we mean that $G\actson (X,\mu)$ is a non-singular Borel action of a \lcsc group on a standard probability space.
Moreover, we assume all cocycles to be Borel cocycles.

\begin{examples}[\cite{Jolissaint:propercocycle}]
  Jolissaint gives the following elementary examples of proper cocycles.
  \begin{itemize}
  \item Let $H\subset G$ be a closed subgroup of a \lcsc group. Set $X=G/H$ and consider any quasi-invariant probability measure $\mu$ on $X$. Let $s:X\rightarrow G$
    be a regular Borel section (see \cite[lemma 1.1]{Mackey:RegularSection}), i.e.\@ $s(x)H=x$ for all $x\in X$ and $s(K)$ is precompact in $G$, for all compact subsets $K\subset X$. Then the cocycle $\omega:G\times X\rightarrow H$
    that is defined by $\omega(g,x)=s(gx)^{-1}gs(x)$ is a proper cocycle with respect to the family of all compact subsets of $X$.
  \item Let $Q\subset G$ be a closed normal subgroup and set $H=G/Q$. Consider $X$ to be the one-point space and consider the quotient morphism $\pi:G\rightarrow H$ to be a cocycle
    $\pi:G\times X\rightarrow H$. Such a cocycle is proper if and only if $Q$ is compact.
  \end{itemize}
\end{examples}

We add a non-obvious example to the list, coming from measure equivalence of locally compact groups.
Measure equivalence was introduced for discrete groups by Gromov in \cite[section 0.5.E]{G93},
as a measure-theoretic counterpart to coarse equivalence. Even though he only mentions discrete
groups, his definition works well for unimodular \lcsc groups, see for example
\cite{FurmanBaderSauer:MErigidity}. In the non-unimodular case, we need to take a little more care.
We first introduce the much weaker notion of a measure correspondence:
we say that a non-singular action $G\times H\actson (\Omega,\eta)$ is a \emph{measure correspondence} between
$G$ and $H$ if there are standard probability spaces $(X,\mu)$ and $(Y,\nu)$ and measure class preserving
Borel isomorphisms $\varphi:X\times H\rightarrow \Omega$ and $\psi:Y\times G\rightarrow \Omega$
such that $\varphi$ commutes with the $H$ action and $\psi$ commutes with the $G$ action.
We consider the Haar measure on the groups $G$ and $H$.

Between every two \lcsc groups $G,H$, there is a measure correspondence, namely $\Omega=G\times H$ with the left
translation action of $G\times H$. There is also a ``composition'' operation on measure correspondences:
if $\Omega_1$ is a correspondence between $G,H$ and $\Omega_2$ is a measure correspondence between $H,K$, then
$\Omega_1\otimes_H\Omega_2$ is a measure correspondence between $G$ and $K$.

Given a measure correspondence $\Omega$ between $G$ and $H$, we can transfer the $G$-action from
$\Omega$ to $X\times H$, using $\varphi$. This action is of the form
\[g\varphi(x,h)=\varphi(gx,h\omega(g,x)^{-1})\text{ for all }g\in G\text{ and almost all }(x,h)\in X\times H,\]
for some non-singular action $G\actson X$ and some Borel cocycle $\omega:G\times X\rightarrow H$. In a similar way, we find an action of $H$ on $Y$
and a cocycle $\beta:H\times Y\rightarrow G$.
We say that $G$ is a measure equivalence subgroup of $H$ if there is a $G$-invariant probability measure on $X$ in the measure class of $\mu$.
We say that $G$ is measure equivalent to $H$ if
there are invariant probability measures on $X$,$Y$, in the measure class of $\mu$,$\nu$.
If $G,H$ are unimodular, then this is equivalent to the usual notion of measure equivalence, see theorem \ref{thm:MEdef}.
\begin{example}[see theorem \ref{thm:MC}]
  \label{ex:intro:MC}
  Let $G,H$ be \lcsc groups and let $(\Omega,\eta)$ be a measure correspondence between $G$ and $H$. Let $\omega:G\times X\rightarrow H$ be a cocycle as defined above. Then $\omega$
  is a proper cocycle.
\end{example}

Property A for pairs is based on amenability for pairs. Let $G\actson (X,\mu)$ be a non-singular action and denote its Radon-Nikodym cocycle by $\chi:G\times X\rightarrow \IRpos$.
Consider the Koopman representation $\pi_X:G\rightarrow\cU(\Lp^2(X,\mu))$, i.e.\@ $(\pi_X(g)\xi)(x)=\xi(g^{-1}x)\sqrt{\chi(g^{-1},x)}$ for all $g\in G$, $\xi\in\Lp^2(X,\mu)$ and almost all $x\in X$.
Remember that the pair $(G,X)$ is said to be amenable if the Koopman representation $\pi_X$ has almost invariant vectors (see \cite{Eymard:Means, Greenleaf:AmenableActions, Zimmer:AmenablePairs, Jolissaint:AmenablePairs}
for more background and equivalent definitions). In this spirit, we make the following definition.
\begin{definition}
  Let $G\actson (X,\mu)$ be a non-singular action and let $\cA$ be a family of Borel subsets of $X$. We say that the pair $(G,X)$ has property A with respect to the family $\cA$ if
  for every compact set $K\subset G$ and every $\varepsilon>0$, there exists a continuous family $(\xi_g)_{g\in G}$ of unit vectors in $\Lp^2(X,\mu)$ such that
  \begin{itemize}
  \item $\norm{\xi_g-\xi_h}_2<\varepsilon$ whenever $g^{-1}h\in K$
  \item there is a set $A\in\cA$ such that each $\xi_g$ is supported in $gA$.
  \end{itemize}
\end{definition}

It is clear that every amenable pair has property A: take a $(K,\varepsilon)$-invariant vector $\xi\in\Lp^2(X,\mu)$.
We can assume that $\xi$ is supported in $A$ for some set $A\in\cA$. Set now $\xi_g=\pi_X(g)\xi$.
In particular, if the measure class of $\mu$ contains a $G$-invariant probability measure, then we know that the pair $(G,X)$
is amenable and hence has property A.
When $X$ is of the form $G/H$ where $H$ is a closed normal subgroup, then the pair $(G,G/H)$ has property A with respect to the family of compact sets in $G/H$
if and only if the group $G/H$ has property A as a locally compact group. Slightly more generally, when $H\subset G$ is a closed subgroup and $X=G/H$,
then the pair $(G,X)$ has property A with respect to the family of all compact sets in $G/H$ if and only if $X=G/H$ has property A in the sense of Roe \cite{R05},
as a metric space.

In \cite{Jolissaint:permanence}, Jolissaint showed the following theorem. He proved the case with the Haagerup property
before in \cite{Jolissaint:propercocycle}. The statement about the weak Haagerup property was shown before by Knudby \cite{Knudby:WH}, in the
case where the action preserves an infinite measure and satisfies the F\o{}lner condition.
\begin{theorem}[{\cite[theorem 1.4]{Jolissaint:permanence}}, see theorem \ref{thm:amen:proof} for our proper cocycle]
  \label{thm:amen:intro}
  Let $G,H$ be \lcsc groups, let $G\actson (X,\mu)$ be a non-singular action such that the pair $(G,X)$
  is amenable. Let $\omega:G\times X\rightarrow H$
  be a proper cocycle. If $H$ has the Haagerup property, is weakly amenable, respectively has the weak
  Haagerup property, then so does $G$. Moreover, the weak amenability and weak Haagerup constant of $G$ is less
  than that of $H$:
  \[\Lambda_{WA}(G)\leq \Lambda_{WA}(H)\text{ and }\Lambda_{WH}(G)\leq\Lambda_{WH}(H).\]
\end{theorem}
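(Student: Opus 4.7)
The plan is to follow Jolissaint's strategy for \cite[theorem 1.4]{Jolissaint:permanence}, verifying that it goes through with our slightly weaker notion of properness (where the defining estimates hold only relative to a family $\cA$ of Borel subsets exhausting $X$ in measure, not on all of $X$). The core construction transfers a function $\varphi:H\to\IC$ to one on $G$ by averaging against the cocycle using an auxiliary unit vector $\xi\in\Lp^2(X,\mu)$ supported in some $A\in\cA$. Writing a positive-definite $\varphi(h)=\langle\pi(h)v,v\rangle_\cH$ as a diagonal matrix coefficient of a unitary representation $\pi$ of $H$, I would define
\[
\tilde\varphi_\xi(g)=\langle\tilde\pi(g)F,F\rangle=\int_X \varphi\bigl(\omega(g,g^{-1}x)\bigr)\,\xi(g^{-1}x)\overline{\xi(x)}\sqrt{\chi(g^{-1},x)}\,d\mu(x),
\]
where $F(x)=\xi(x)v$ and $\tilde\pi$ is the induced unitary representation of $G$ on $\Lp^2(X,\mu;\cH)$ given by $(\tilde\pi(g)F)(x)=\sqrt{\chi(g^{-1},x)}\,\pi(\omega(g,g^{-1}x))F(g^{-1}x)$. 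By construction $\tilde\varphi_\xi$ is positive-definite, handling the Haagerup case.

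For Herz--Schur multipliers I would start from a bounded factorization $\varphi(h_1^{-1}h_2)=\langle P(h_1),Q(h_2)\rangle_\cH$ with $\|P\|_\infty\|Q\|_\infty\leq\|\varphi\|_{\text{HS}}$, and set
\[
\tilde P(g)(x)=\xi(g^{-1}x)\sqrt{\chi(g^{-1},x)}\,P(\omega(g,g^{-1}x)),\qquad \tilde Q(g)(x)=\xi(g^{-1}x)\sqrt{\chi(g^{-1},x)}\,Q(\omega(g,g^{-1}x)),
\]
both in $\Lp^2(X,\mu;\cH)$. The cocycle identity $\omega(g_1^{-1},x)\omega(g_2,g_2^{-1}x)=\omega(g_1^{-1}g_2,g_2^{-1}x)$ together with the substitution $y=g_2^{-1}x$ (whose Jacobian combines cleanly with $\sqrt{\chi(g_1^{-1},x)\chi(g_2^{-1},x)}$ to yield $\sqrt{\chi(g_1^{-1}g_2,y)}$) gives $\langle\tilde P(g_1),\tilde Q(g_2)\rangle=\tilde\varphi_\xi(g_1^{-1}g_2)$; since unitarity of $\pi_X$ implies $\|\tilde P(g)\|_2\leq\|P\|_\infty$ and similarly for $\tilde Q$, we obtain $\|\tilde\varphi_\xi\|_{\text{HS}}\leq\|\varphi\|_{\text{HS}}$.

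The second key step is to exploit properness of $\omega$. Since $\xi$ is supported in $A$, the integrand in $\tilde\varphi_\xi(g)$ vanishes off the slice $gA\cap A$. Given $\varphi\in C_0(H)$ and $\delta>0$, I would pick a compact $L\subset H$ with $|\varphi|<\delta$ off $L$; the second clause of properness then provides a precompact $K(L,A,A)\subset G$ such that $\omega(g,g^{-1}x)\notin L$ for almost all $x\in gA\cap A$ whenever $g\notin K(L,A,A)$, and Cauchy--Schwarz yields $|\tilde\varphi_\xi(g)|\leq\delta$. Hence $\tilde\varphi_\xi\in C_0(G)$, and the same estimate shows $\tilde\varphi_\xi$ has precompact support whenever $\varphi$ does (handling the weak amenability case). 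Conversely, for $g$ in a fixed compact $K\subset G$ the first clause of properness confines $\omega(g,g^{-1}x)$ to a precompact $L(K,A,A)\subset H$ for almost all $x\in gA\cap A$, so combining $\varphi\to 1$ uniformly on $L(K,A,A)$ with the almost-invariance $\langle\pi_X(g)\xi,\xi\rangle\to 1$ uniformly on $K$ (provided by amenability of $(G,X)$) gives $\tilde\varphi_\xi\to 1$ uniformly on $K$.

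The main obstacle will be coordinating these approximations in the correct order, since $L(K,A,A)$ depends on the choice of $A\in\cA$. Given a target compact $K\subset G$ and tolerance $\varepsilon$, I would first enlarge $A$ so that $\mu(X\setminus A)$ is negligible, then select a sufficiently almost-invariant $\xi$ supported in $A$ (truncating a Koopman-almost-invariant vector to $A$ and renormalizing, which is legitimate once $A$ is large enough to control the truncation error), and only then choose a witnessing $\varphi$ on $H$ close to $1$ on the now-fixed precompact $L(K,A,A)$ while retaining its $C_0$ or compact-support property and its Herz--Schur norm. Once this bookkeeping is in place, a standard subnet extraction delivers the three conclusions (Haagerup property, $\Lambda_{WA}(G)\leq\Lambda_{WA}(H)$, and $\Lambda_{WH}(G)\leq\Lambda_{WH}(H)$) in parallel from the corresponding hypotheses on $H$.
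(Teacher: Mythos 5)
Your proposal is correct and follows essentially the same route as the paper's proof of theorem \ref{thm:amen:proof}: the transferred function $\tilde\varphi_\xi$ is exactly the paper's $f(g)=\int_X\overline{\xi(x)}(\pi_g\xi)(x)f_0(\omega(g,g^{-1}x))\,d\mu(x)$, the Herz--Schur/positive-definiteness control via Hilbert-space-valued factorizations $\tilde P,\tilde Q$ matches the paper's $\eta,\zeta$, and the use of the two properness clauses (relative to a set $A\in\cA$ supporting $\xi$) for the $C_0$/compact-support and near-invariance estimates is the same. The order of choices you describe (first $A$ and $\xi$, then $L(K,A,A)$, then the witnessing function on $H$) is also the order used in the paper.
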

Together with example \ref{ex:intro:MC}, this shows that the Haagerup property, weak amenability and the weak
Haagerup property for locally compact groups pass to measure equivalence subgroups.

The main result of this paper is the following theorem.
\begin{theorem}[see theorem \ref{thm:main:proof}]
  \label{thm:main:intro}
  Let $G,H$ be \lcsc groups and let $G\actson (X,\mu)$ be a non-singular action. Suppose that $\omega:G\times X\rightarrow H$ is a proper cocycle with respect to some family $\cA$
  and that the pair $(G,X)$ has property A with respect to the same family $\cA$.
  If $H$ has property A (respectively is coarsely embeddable), then $G$ has property A (respectively is coarsely embeddable).
\end{theorem}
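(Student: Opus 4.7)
The plan is to use the positive-type kernel characterisation of both notions. For property A of $G$ it suffices, given compact $K\subset G$ and $\varepsilon>0$, to construct a continuous positive-type kernel $k:G\times G\to\IC$ with $k(g,g)=1$, $|1-k(g,g')|<\varepsilon$ for $g^{-1}g'\in K$, and \emph{finite propagation}, i.e.\ $k(g,g')=0$ when $g^{-1}g'$ lies outside some compact $\widetilde K\subset G$. Coarse embeddability admits the analogous characterisation with finite propagation replaced by $k(g,g')\to 0$ as $d_G(g,g')\to\infty$. We build $k$ by stitching together, via the cocycle $\omega$, the given data on $H$ and on the pair $(G,X)$.

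Given $K$ and $\varepsilon$, property A of the pair $(G,X)$ supplies a continuous field $(\xi_g)_{g\in G}$ of unit vectors in $\Lp^2(X,\mu)$ with $\supp\xi_g\subset gA$ for a fixed $A\in\cA$ and $\norm{\xi_g-\xi_{g'}}_2<\varepsilon$ when $g^{-1}g'\in K$. After a mild closure of $\cA$ if necessary (automatic in the examples of Theorem \ref{thm:main:ex}), we pick $B\in\cA$ with $KA\subset B$ and invoke condition (1) of properness to obtain a compact $K_H\subset H$ such that $\omega(k,a)\in K_H$ for every $k\in K$ and a.e.\ $a\in A$. In the property A case, apply property A of $H$ to $K_H$ and $\varepsilon$ to get unit vectors $(\eta_h)_{h\in H}$ in $\Lp^2(H)$ with $\supp\eta_h\subset hL$ for a compact $L\subset H$ and $\norm{\eta_h-\eta_{h'}}_2<\varepsilon$ when $h^{-1}h'\in K_H$. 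Set
\[\Xi_g(x,h):=\xi_g(x)\,\eta_{\omega(g,\,g^{-1}x)}(h)\in\Lp^2(X\times H),\qquad k(g,g'):=\langle\Xi_g,\Xi_{g'}\rangle.\]
Then $k$ is positive-type and $k(g,g)=1$. Splitting $\Xi_g-\Xi_{g'}$ into a $\xi$-difference and an $\eta$-difference term and using the cocycle identity
\[\omega(g,\,g^{-1}x)^{-1}\omega(g',\,g'^{-1}x)=\omega(g^{-1}g',\,z)\quad\text{for }x=g'z,\ z\in A,\]
one sees that for $g^{-1}g'\in K$ the latter ratio lies in $K_H$ on $\supp\xi_{g'}$, so both differences are $\varepsilon$-small and $|1-k(g,g')|\leq 2\varepsilon$.

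The main obstacle is finite propagation. If $k(g,g')\ne 0$, then on a set of positive $\mu$-measure in $gA\cap g'A$ the vectors $\eta_{\omega(g,g^{-1}x)}$ and $\eta_{\omega(g',g'^{-1}x)}$ have overlapping supports, forcing the ratio $\omega(g,g^{-1}x)^{-1}\omega(g',g'^{-1}x)\in LL^{-1}$ there. By the cocycle identity, this reads $\omega(g^{-1}g',z)\in LL^{-1}$ on a positive-measure subset of $\{z\in A:(g^{-1}g')z\in A\}$. Condition (2) of the proper cocycle, applied to $A,A\in\cA$ and the compact $LL^{-1}\subset H$, now forces $g^{-1}g'$ into the precompact set $\widetilde K:=K(LL^{-1},A,A)\subset G$, giving finite propagation.

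For coarse embeddability, the construction goes through with property A of $H$ replaced by a coarse embedding, packaged as a positive-type kernel $k_H$ on $H$ with $k_H(h,h)=1$, $|1-k_H(h,h')|<\varepsilon$ for $h^{-1}h'\in K_H$, and decaying at infinity in the coarse-geometric sense: for every $\delta>0$ there is a compact $L_\delta\subset H$ with $|k_H(h,h')|<\delta$ when $h^{-1}h'\notin L_\delta$. Approximate invariance of the resulting kernel on $G$ follows as above. For the decay at infinity: applying condition (2) to $L_\delta$ produces a precompact $K_\delta:=K(L_\delta,A,A)\subset G$ such that for $g^{-1}g'\notin K_\delta$ the cocycle identity puts $\omega(g^{-1}g',z)\notin L_\delta$ on a.e.\ relevant $z$, whence $|k(g,g')|\leq\delta$. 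Continuity of the constructed kernel in $g,g'$, and the closure properties of $\cA$ used above, are the remaining technical burdens; both are handled naturally in the applications covered by Theorem \ref{thm:main:ex}.
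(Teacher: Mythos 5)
Your construction is essentially the paper's own proof: the kernel $k(g,g')=\inprod{\Xi_g}{\Xi_{g'}}$ you build is precisely $\int_X\overline{\xi_g(x)}\,\xi_{g'}(x)\,k_0\bigl(\omega(g,g^{-1}x),\omega(g',g'^{-1}x)\bigr)\,\D\mu(x)$ with $k_0(h,h')=\inprod{\eta_h}{\eta_{h'}}$, which is the formula the paper uses; the cocycle identity $\omega(g,g^{-1}x)^{-1}\omega(g',g'^{-1}x)=\omega(g^{-1}g',g'^{-1}x)$, the role of $LL^{-1}$, and the appeal to the precompact sets $K(\,\cdot\,,A,A)$ from properness condition (2) for finite propagation (resp.\ decay at infinity) all match the paper step for step.

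The one place where your write-up asks for more than the hypotheses grant is the approximate-invariance estimate. By splitting $\Xi_g-\Xi_{g'}$ into a $\xi$-difference and an $\eta$-difference, you must control the $\eta$-difference on all of $\supp\xi_{g'}$, i.e.\ control $\omega(g^{-1}g',z)$ for a.e.\ $z\in A$ rather than only for $z\in A\cap (g^{-1}g')^{-1}A$; this is what forces you to ask for some $B\in\cA$ with $KA\subset B$, a closure property that a large family is not assumed to possess (it is closed under finite unions and Borel subsets, not under translation by $K$). This is avoidable, and the paper's estimate shows how: bound $\abs{k(g,g')-1}$ directly by $\abs{\inprod{\xi_g}{\xi_{g'}}-1}+\int_X\abs{\xi_g(x)}\,\abs{\xi_{g'}(x)}\,\abs{k_0(\cdots)-1}\,\D\mu(x)$; the second integrand vanishes outside $gA\cap g'A$, and for $x$ in that set the point $z=g'^{-1}x$ lies in $A\cap(g^{-1}g')^{-1}A$, so condition (1) of properness with $A=B$ already yields $\omega(g^{-1}g',z)\in L(K,A,A)$. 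With that rearrangement no extra assumption on $\cA$ is needed. Likewise, the continuity you defer is not a real obstruction: the paper proves separately that non-continuous positive definite kernels suffice to characterize both property A and coarse embeddability, so your kernel need not be shown continuous.
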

It is clear from the above that theorem \ref{thm:main:ex} is a direct consequence of theorem \ref{thm:main:intro}.

\section{Preliminaries on approximation properties}
\label{sect:defs}
Let $G$ be a \lcsc group. A continuous function $\varphi:G\rightarrow \IC$ is called a Herz-Schur multiplier if there exist bounded continuous functions
$\xi,\eta:G\rightarrow \cH$ from $G$ into Hilbert space $\cH$, such that
\[\varphi(g^{-1}h)=\inprod{\xi_g}{\eta_h}\text{ for all }g,h\in G.\]
For a bounded function $\xi:G\rightarrow \cH$, we set $\norm{\xi}_{\infty}=\sup_{g\in G}\norm{\xi_g}$.
The Herz-Schur norm $\norm{\varphi}_{HS}$ of a Herz-Schur multiplier $\varphi:G\rightarrow \IC$ is the infimum of $\norm{\xi}_{\infty}\norm{\eta}_\infty$
where $\xi,\eta$ runs over all pairs of continuous bounded functions $\xi,\eta:G\rightarrow \cH$ from $G$ into Hilbert space $\cH$,
satisfying $\varphi(g^{-1}h)=\inprod{\xi_g}{\eta_h}$ for all $g,h\in G$.

Suppose that $\xi,\eta:G\rightarrow \cH$ are bounded Borel maps from $G$ into a separable Hilbert space and that
there is a function $\varphi:G\rightarrow \IC$ such that $\varphi(g^{-1}h)=\inprod{\xi_g}{\eta_h}$. Then it follows automatically that $\varphi$
is continuous, so it is a Herz-Schur multiplier. Moreover, its Herz-Schur norm is bounded above by $\norm{\varphi}_{HS}\leq \norm{\xi}_\infty\norm{\eta}_\infty$.
So in the following statements, it does not matter if we take continuous maps or Borel maps. This was proven in the unpublished manuscript \cite{Haagerup:CBAP}.
Knudby included this proof also in his paper \cite[lemma C.1]{Knudby:WH}.

One of the possible definitions of amenability is the following.
\begin{definition}
  A \lcsc group $G$ is called amenable if there exists a sequence $(\varphi_n)_n$ of continuous compactly supported
  positive type functions $\varphi_n:G\rightarrow\IC$ such that $\varphi_n(g)$ converges to $1$ uniformly on compact subsets of $G$.
\end{definition}

If in the above definition, you replace the condition that $\varphi_n$ is compactly supported by the condition that $\varphi_n$ tends to $0$ at infinity,
you get the Haagerup property. If instead you fix a constant $C\geq 1$ and replace the condition that $\varphi_n$ is of positive type by the condition that $\varphi_n$ is
a Herz-Schur multiplier with Herz-Schur norm bounded by $\norm{\varphi_n}_{HS}\leq C$, then you get the definition of weak amenability with weak amenability constant (Cowling-Haagerup constant) less
than $C$. Recently, Knudby \cite{Knudby:WH} studied the weak Haagerup property, where you apply both replacements above.
\begin{definition}
  Let $G$ be a \lcsc group.
  \begin{itemize}
  \item We say that $G$ has the Haagerup property if there exists a sequence $(\varphi_n)_n$ of positive type functions $\varphi_n\in C_0(G)$ such that
    $\varphi_n(g)$ converges to $1$ uniformly on compact subsets of $G$.
  \item We say that $G$ is weakly amenable if there exists a constant $C\geq 1$ and a sequence $(\varphi_n)_n$ of continuous compactly supported Herz-Schur multipliers such that $\varphi_n(g)$
    converges to $1$ uniformly on compact subsets of $G$, and such that $\norm{\varphi_n}_{HS}\leq C$ for all $n\in\IN$. The infimum of all such $C$ is called the weak amenability constant $\Lambda_{WA}(G)$
    of $G$. This constant is also called the Cowling-Haagerup constant of $G$ and is often denoted by $\Lambda(G)$.
  \item We say that $G$ is weakly Haagerup if there exists a constant $C\geq 1$ and a sequence $(\varphi_n)_n$ of continuous Herz-Schur multipliers $\varphi_n\in C_0(G)$ such that $\varphi_n(g)$
    converges to $1$ uniformly on compact subsets of $G$, and such that $\norm{\varphi_n}_{HS}\leq C$ for all $n\in\IN$. The infimum of all such $C$ is called the weak Haagerup constant $\Lambda_{WH}(G)$
    of $G$.
  \end{itemize}
\end{definition}

In \cite{DL:A}, we introduced several equivalent definitions of property A and coarse embeddability of \lcsc groups. We review the definitions that are relevant for the current paper.
\begin{definition}[see {\cite[theorem 2.3]{DL:A}}]
  \label{def:A}
  Let $G$ be a \lcsc group with left Haar measure $\mu$. We say that $G$ has property A if one of the following equivalent conditions holds
  \begin{enumerate}
  \item \label{def:A:L2}For every compact subset $K\subset G$ and any $\varepsilon>0$, there is a continuous family $(\xi_g)_{g\in G}$ of unit vectors
    in $\Lp^2(G,\mu)$ such that
    \begin{itemize}
    \item $\norm{\xi_g-\xi_h}<\varepsilon$ whenever $g^{-1}h\in K$
    \item there is a compact set $L\subset G$ such that each $\xi_g$ is supported in $gL$.
    \end{itemize}
  \item \label{def:A:pos}For every compact subset $K\subset G$ and any $\varepsilon>0$ there is a continuous positive definite kernel $k:G\times G\rightarrow \IC$ such that
    \begin{itemize}
    \item $\abs{k(g,h)-1}<\varepsilon$ whenever $g^{-1}h\in K$
    \item the set $\{g^{-1}h\in G\mid k(g,h)\not=0\}$ is precompact.
    \end{itemize}
  \end{enumerate}
\end{definition}

\begin{definition}[see {\cite[theorem 3.4]{DL:A}}]
  \label{def:CE}
  Let $G$ be a \lcsc group. We say that $G$ is coarsely embeddable (into a Hilbert space) if one of the following equivalent conditions holds
  \begin{enumerate}
  \item \label{def:CE:emb}there exists a Hilbert space $H$ and a continuous map $u:G\rightarrow H$ such that
    \begin{itemize}
    \item For every compact set $K\subset G$, there is a real number $R>0$ such that\\ $\norm{u(g)-u(h)}<R$ for all $g,h\in G$ with $g^{-1}h\in K$.
    \item For every real number $R>0$, the set $\{g^{-1}h\in G\mid \norm{u(g)-u(h)}<R\}$ is precompact.
    \end{itemize}
  \item \label{def:CE:CondNeg}there exists a continuous conditionally negative definite kernel $k:G\times G\rightarrow\IC$ such that
    \begin{itemize}
    \item $k$ is bounded on tubes, i.e.\@ for every compact subset $K\subset G$ there is a real number $R>0$ such that $k(g,h)<R$ for all $g,h\in G$ with $g^{-1}h\in K$
    \item $k$ is proper, i.e.\@ for every real number $R>0$, the set $\{g^{-1}h\in G\mid k(g,h)<R\}$ is precompact.
    \end{itemize}
  \end{enumerate}
\end{definition}

By Schoenberg's theorem, we can translate definition \ref{def:CE}.\ref{def:CE:CondNeg} to a statement that ressembles definition \ref{def:A}.\ref{def:A:pos}.
The proof is a standard application of Schoenberg's theorem, see for example \cite[theorem 2.1.1]{HaagerupProperty}. We give a complete proof, for the convenience of the readers that are
not familiar with that argument.
\begin{theorem}
  \label{thm:CEkern}
  A \lcsc group $G$ is coarsely embeddable iff.\@
  for every compact subset $K\subset G$ and any $\varepsilon>0$ there is a continuous positive definite kernel $k:G\times G\rightarrow \IC$ such that
  \begin{itemize}
  \item $\abs{k(g,h)-1}<\varepsilon$ whenever $g^{-1}h\in K$
  \item for every $\delta>0$, the set $\{g^{-1}h\in G\mid k(g,h)>\delta\}$ is precompact.
  \end{itemize}
\end{theorem}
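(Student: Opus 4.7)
The plan is to translate between the conditionally negative definite (c.n.d.) kernel of Definition~\ref{def:CE}.\ref{def:CE:CondNeg} and the family of positive definite kernels in the statement by a direct application of Schoenberg's theorem, exactly as in \cite[Theorem 2.1.1]{HaagerupProperty}.

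For the forward implication, I would start from a coarse embedding $u\colon G\to\cH$ supplied by Definition~\ref{def:CE}.\ref{def:CE:emb} and use the real, symmetric c.n.d.\ kernel $k(g,h)=\norm{u(g)-u(h)}^2$, which vanishes on the diagonal, is bounded on tubes, and is proper. By Schoenberg's theorem, for each $t>0$ the kernel
\[k_t(g,h)=\exp\bigl(-t\,k(g,h)\bigr)\]
is continuous and positive definite with $0<k_t\le 1$. Given a compact $K\subset G$ and $\varepsilon>0$, pick $R_K>0$ bounding $k$ on the $K$-tube and $t$ small enough that $e^{-tR_K}>1-\varepsilon$; then $|k_t-1|<\varepsilon$ on the $K$-tube. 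For any $\delta>0$, the set $\{g^{-1}h:k_t(g,h)>\delta\}$ equals $\{g^{-1}h:k(g,h)<t^{-1}\log\delta^{-1}\}$, which is precompact since $k$ is proper.

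For the reverse implication, I would synthesize a single c.n.d.\ kernel from the given family by a weighted sum. Choose an exhaustion $K_1\subset K_2\subset\cdots$ of $G$ by compact sets, and for each $n$ apply the hypothesis with $(K_n,2^{-n})$ to obtain a continuous positive definite kernel $k_n$. Since $|k_n(g,g)-1|<2^{-n}$, I may normalize pointwise by $\sqrt{k_n(g,g)k_n(h,h)}$ to assume $k_n(g,g)=1$ (this only inflates the constants by a bounded factor). Then $|k_n|\le 1$, and the kernel $\varphi_n(g,h)=2(1-\Re k_n(g,h))$ is continuous, conditionally negative definite (since both $1-k_n$ and $1-\overline{k_n}$ are c.n.d.), takes values in $[0,4]$, and satisfies $\varphi_n<C\cdot 2^{-n}$ on the $K_n$-tube. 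Define
\[\tilde k(g,h)=\sum_{n=1}^{\infty}n\,\varphi_n(g,h).\]
On every tube over a compact set $K\subset K_m$, the tail $\sum_{n\ge m}n\varphi_n$ is dominated by $C\sum_{n\ge m}n\cdot 2^{-n}$, so the sum converges uniformly on compacta, producing a continuous c.n.d.\ kernel that is bounded on tubes. For properness, if $\tilde k(g,h)<R$ then $n\varphi_n(g,h)<R$ for every $n$; picking any fixed $n>2R/C$ gives $\Re k_n(g,h)>1/2$, hence $|k_n(g,h)|>1/2$, placing $g^{-1}h$ in the precompact set supplied by the hypothesis on that single $k_n$.

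The main technical nuisance lies entirely in the reverse direction: the weights $n$ must be large enough to dominate any finite $R$ (giving properness) yet tame enough that $\sum n\cdot 2^{-n}<\infty$ (giving uniform convergence on compacta and boundedness on tubes). The diagonal normalization $k_n(g,g)=1$ is a routine but essential step to ensure $\varphi_n\ge 0$, and once it is in place the forward direction is a one-line consequence of Schoenberg's theorem.
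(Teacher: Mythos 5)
Your proof is correct and follows essentially the same route as the paper: Schoenberg's theorem handles the forward direction, and the converse is a weighted series $\sum_n c_n\varphi_n$ of conditionally negative definite kernels built from the $k_n$ along an exhaustion by compact sets, with weights large enough to force properness yet summable against the $\varepsilon_n$ to give uniform convergence on tubes. The only cosmetic difference is your choice of $\varphi_n=2(1-\Re k_n)$ after normalizing the diagonal, where the paper uses $1-\abs{k_n(g,h)}^2/\bigl(k_n(g,g)k_n(h,h)\bigr)$; both serve the same purpose and the rest of the argument is identical.
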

\begin{proof}
  Let $k:G\times G\rightarrow \IR$ be a conditionally negative definite kernel as in definition \ref{def:CE}.\ref{def:CE:CondNeg}.
  Observe that $k$ takes positive values.
  Let $K\subset G$ be a compact set and let $\varepsilon>0$.
  By Since $k$ is bounded on tubes, there is a real number $R>0$ such that $k(g,h)<R$ for all $g,h\in G$ with $g^{-1}h\in K$.
  Let $t>0$ be such that $1-\exp(-tR)<\varepsilon$. Schoenberg's theorem asserts that the kernel $k_0(g,h)=\exp(-tk(g,h))$ is positive definite. It is easy to see that
  \[\abs{k_0(g,h)-1}=1-\exp(-tk(g,h))\leq 1-\exp(-tR)<\varepsilon\]
  whenever $g^{-1}h\in K$. Moreover, let $\delta>0$. Since $k$ is a proper kernel, we know that the set 
  \[\{g^{-1}h\mid \abs{k_0(g,h)}>\delta\}=\{g^{-1}h\mid \exp(-tk(g,h))>\delta\}=\left\{g^{-1}h\left\vert k(g,h)<\frac{-\log(\delta)}{t}\right.\right\}\]
  is precompact.

  On the other hand, suppose that $G$ satisfies the condition in theorem \ref{thm:CEkern}. We show that $G$ is coarsely embeddable. Fix an increasing sequence of compact sets $(K_n)_n$
  in $G$ such that $G=\bigcup_n \interior{K_n}$, where $\interior{K_n}$ denotes the interior of $K_n$. We can assume that $e\in K_1$.
  Take a sequence $(c_n)_n$ of positive real numbers that tends to infinity and let $(\varepsilon_n)_n$ be a sequence of strictly
  positive real numbers such that $\sum_n c_n\varepsilon_n<\infty$. We can assume that all the $\varepsilon_n<\frac12$. Then we find continuous positive definite kernels $(k_n)_n$ such that
  \begin{itemize}
  \item $\abs{k_n(g,h)-1}<\varepsilon_n$ whenever $g^{-1}h\in K_n$
  \item for every $\delta>0$, the set $\{g^{-1}h\in G\mid \abs{k_n(g,h)}>\delta\}$ is precompact.
  \end{itemize}

  Define a new kernel $k:G\times G\rightarrow\IR_+$ by the formula
  \[k(g,h)=\sum_n c_n\left(1-\frac{\abs{k_n(g,h)}^2}{k_n(g,g)k_n(h,h)}\right).\]
  An elementary computation shows that $k$ is conditionally negative definite. The series that defines $k$ is uniformly convergent on sets of the form
  $\{(g,h)\in G\times G\mid g^{-1}h\in L\}$ for compact sets $L\subset G$: let $L$ be compact, then $L\subset K_{N}$ for some $N\in \IN$.
  It follows that $\abs{1-\frac{\abs{k_n(g,h)}^2}{k_n(g,g)k_n(h,h)}}<8\varepsilon_n$ for all $g,h\in G$ with $g^{-1}h\in L$ and $n\geq N$. Therefore we also get that
  \[\sum_{n>N} c_n\left(1-\frac{\abs{k_n(g,h)}^2}{k_n(g,g)k_n(h,h)}\right)<8\sum_{n>N}c_n\varepsilon_n\rightarrow 0\text{ as }N\rightarrow\infty\]
  for all $g,h\in G$ with $g^{-1}h\in L$.
  As a consequence, we see that $k$ is well-defined, continuous and bounded on tubes.

  It remains to show that $k$ is proper. Let $R>0$ and take $n\in\IN$ such that $c_n>4R$. Then we see that $L=\{g^{-1}h\mid \abs{k_n(g,h)}>\frac{1}{4}\}$ is precompact.
  Whenever $g,h\in G$ are such that $g^{-1}h\not\in L$, we compute that
  \[k(g,h)\geq c_n\left(1-\frac{\abs{k_n(g,h)}^2}{k_n(g,g)k_n(h,h)}\right)\geq c_n\frac14>R.\]
\end{proof}

In fact, the continuity of the kernels $k:G\times G\rightarrow \IC$, the families $(\xi_g)_{g\in G}$ and the coarse embedding $u:G\rightarrow H$ is not important, neither for property A nor for uniform embeddability.
In the case of the family $(\xi_g)_g$ and the coarse embedding $u$, an argument was given in \cite{DL:A}. We give a short and elementary argument for the positive definite kernels $k$.
The main idea of the argument is that we restrict the non-continuous kernel to a metric lattice in $G$ and then extend it back to a continuous kernel on $G\times G$, using a partition of unity in $C_c(G)$.
A very similar argument was given in \cite{R05} to show that property A passes from a metric lattice to the
complete space.
\begin{proposition}
  A \lcsc group $G$ has property A (is coarsely embeddable) if and only if for every compact subset $K\subset G$ and every $\varepsilon>0$, there is a (not necessarily continuous) positive definite kernel $k:G\times G\rightarrow \IC$
  such that
  \begin{itemize}
  \item $\abs{k(g,h)-1}<\varepsilon$ whenever $g,h\in G$ satisfy $g^{-1}h\in K$
  \item (in the property A case) the set $\{g^{-1}h\mid k(g,h)\not=0\}$ is precompact
  \item (in the coarse embeddability case) for every $\delta>0$, the set $\{g^{-1}h\mid k(g,h)>\delta\}$ is precompact
  \end{itemize}
\end{proposition}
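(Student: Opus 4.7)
The forward direction is immediate: the continuous kernels provided by Definition~\ref{def:A}.\ref{def:A:pos} and Theorem~\ref{thm:CEkern} already satisfy the stronger conclusions. For the converse I would follow the strategy alluded to in the text: discretize $G$ by a ``metric lattice'', restrict the given non-continuous kernel to this lattice, then smooth it back to a continuous kernel on $G\times G$ using a compactly supported partition of unity.

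Concretely, by Struble's theorem pick a proper, left-invariant, compatible metric $d$ on $G$. Take a maximal $s$-separated subset $\Gamma\subset G$ for some $s>0$; by maximality the balls $B_s(\gamma)$, $\gamma\in\Gamma$, cover $G$, and properness forces $\Gamma$ to be countable and the cover to be locally finite. Fix a continuous partition of unity $(\varphi_\gamma)_{\gamma\in\Gamma}$ with $\mathrm{supp}(\varphi_\gamma)\subset \overline{B_r(\gamma)}$ for some fixed $r>0$ (for instance, translate one fixed bump function supported on $B_r(e)$ and renormalize). Now given a compact set $K\subset G$ and $\varepsilon>0$, form the enlarged compact set $K'=\overline{B_r(e)}\cdot K\cdot \overline{B_r(e)}$, and apply the hypothesis at $(K',\varepsilon)$ to obtain a (not necessarily continuous) positive definite kernel $k\colon G\times G\to\IC$. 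Restrict $k$ to $\Gamma\times\Gamma$ (still positive definite) and set
\[
\tilde k(g,h)\;=\;\sum_{\gamma,\delta\in\Gamma}\varphi_\gamma(g)\,\varphi_\delta(h)\,k(\gamma,\delta).
\]
By local finiteness this sum is finite on compact subsets of $G\times G$, so $\tilde k$ is continuous. For positive definiteness one just expands $\sum_{i,j}c_i\overline{c_j}\tilde k(g_i,g_j)=\sum_{\gamma,\delta}k(\gamma,\delta)\,a_\gamma\overline{a_\delta}\geq0$ where $a_\gamma=\sum_i c_i\varphi_\gamma(g_i)$.

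For the quantitative bounds: if $g^{-1}h\in K$ and $\varphi_\gamma(g)\varphi_\delta(h)\neq 0$, then left-invariance of $d$ gives $\gamma^{-1}g,\,h^{-1}\delta\in\overline{B_r(e)}$, hence $\gamma^{-1}\delta\in K'$ and $|k(\gamma,\delta)-1|<\varepsilon$; since $\sum_\gamma\varphi_\gamma\equiv 1$,
\[
\tilde k(g,h)-1\;=\;\sum_{\gamma,\delta}\varphi_\gamma(g)\varphi_\delta(h)\bigl(k(\gamma,\delta)-1\bigr),
\]
and $|\tilde k(g,h)-1|<\varepsilon$. For the support/level-set condition, observe that $\tilde k(g,h)$ is a convex combination of the numbers $k(\gamma,\delta)$ with $(\gamma,\delta)$ ranging over pairs with $d(g,\gamma),d(h,\delta)\leq r$. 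In the property A case, $\tilde k(g,h)\neq 0$ forces at least one such $k(\gamma,\delta)\neq 0$, hence $\gamma^{-1}\delta$ lies in the precompact level set $L$ of $k$ and therefore $g^{-1}h=(g^{-1}\gamma)(\gamma^{-1}\delta)(\delta^{-1}h)\in\overline{B_r(e)}\cdot L\cdot\overline{B_r(e)}$, which is precompact; the same convex-combination argument, but using $|\tilde k(g,h)|\leq\sum\varphi_\gamma(g)\varphi_\delta(h)|k(\gamma,\delta)|$, handles the coarse embeddability case by forcing some $|k(\gamma,\delta)|>\delta$.

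I expect the serious content to be entirely in the discretization setup (ensuring that a net $\Gamma$ and a partition of unity with uniformly compact supports exist and cooperate), not in any analytic estimate; once those are in place the verifications are bookkeeping. The only mildly subtle step is the convex combination bound for the coarse embeddability case, which is why one has to interpret ``$k(g,h)>\delta$'' as ``$|k(g,h)|>\delta$'' (as is done in the proof of Theorem~\ref{thm:CEkern}) so that the inequality $|\tilde k(g,h)|\leq \sum\varphi_\gamma(g)\varphi_\delta(h)|k(\gamma,\delta)|$ transfers the precompactness from $k$ to $\tilde k$.
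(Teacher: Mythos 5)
Your proof is correct and follows essentially the same route as the paper's: the paper also restricts the discontinuous kernel to a discrete net (group elements $g_n$ with $\supp f_n\subset g_nU$ for a fixed compact neighborhood $U$, which is your metric lattice in group-theoretic clothing), enlarges $K$ to $UKU^{-1}$, and glues back with a locally finite partition of unity, verifying positive definiteness, the $\varepsilon$-bound, and the level-set precompactness by exactly the convex-combination bookkeeping you describe.
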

\begin{proof}
  It is clear that property A (respectively coarse embeddability) implies our condition. Suppose now that $G$ is a \lcsc group that satisfies our condition. Fix a compact neighborhood $U$ of identity in $G$,
  and take continuous functions $f_n:G\rightarrow [0,1]$, $(n\in\IN)$ with the following properties
  \begin{itemize}
  \item $\sum_nf_n(g)=1$ for all $g\in G$ and the convergence is uniform on compact subsets of $G$.
  \item for every $g\in G$, there are only finitely many $n\in\IN$ such that $f_n(g)\not=0$.
  \item for every $n\in\IN$, there is a group element $g_n\in G$ such that $\supp f_n\subset g_n U$.
  \end{itemize}

  Let $K\subset G$ be a compact subset, and let $\varepsilon>0$. By our condition, we find a positive definite kernel $k_0:G\times G\rightarrow \IC$
  with the properties
  \begin{itemize}
  \item $\abs{k_0(g,h)-1}<\varepsilon$ whenever $g,h\in G$ satisfy $g^{-1}h\in UKU^{-1}$
  \item (in the property A case) the set $\{g^{-1}h\mid \abs{k_0(g,h)}\not=0\}$ is precompact
  \item (in the coarse embeddability case) for every $\delta>0$, the set $\{g^{-1}h\mid \abs{k_0(g,h)}>\delta\}$ is precompact
  \end{itemize}
  Define a new kernel $k:G\times G\rightarrow \IC$ by the formula
  \[k(g,h)=\sum_{n,m}f_n(g)f_m(h)k_0(g_n,g_m)\text{ for all }g,h\in G.\]
  This sum converges uniformly on compact subsets of $G\times G$, so it follows that $k$ is continuous. It remains to show that it satisfies the following conditions
  \begin{enumerate}
  \item $k$ is positive definite
  \item $\abs{k(g,h)-1}<\varepsilon$ whenever $g,h\in G$ satisfy $g^{-1}h\in K$
  \item (in the property A case) the set $\{g^{-1}h\mid k(g,h)\not=0\}$ is precompact
  \item (in the coarse embeddability case) for every $\delta>0$, the set $\{g^{-1}h\mid k(g,h)>\delta\}$ is precompact.
  \end{enumerate}

  To prove (1), take $h_1,\ldots,h_s\in G$ and $c_1,\ldots, c_s\in\IC$ arbitrarily. Then we compute that
  \begin{align*}
    \sum_{i,j=1}^s \overline c_ic_jk(h_i,h_j)&=\sum_{n,m\in\IN}\sum_{i,j=1}^s \overline c_ic_jf_n(h_i)f_m(h_j)k_0(g_n,g_m)\\
    &=\sum_{n,m\in\IN} \left(\sum_i \overline c_if_n(h_i)\right) \left(\sum_i c_if_m(h_i)\right) k_0(g_n,g_m).
  \end{align*}
  Observe that only finitely many terms in this last sum are non-zero, so the result is positive because $k_0$ is positive definite.

  We prove condition (2) as follows. Let $g,h\in G$ be such that $g^{-1}h\in K$. If $n\in\IN$ is such that $f_n(g)\not=0$, then $g_n^{-1}g\in U$.
  Thus for every $n,m\in\IN$ with $f_n(g)f_m(h)\not=0$, we see that $g_n^{-1}g_m\in UKU^{-1}$, and hence $\abs{k_0(g_n,g_m)-1}<\varepsilon$. It follows that
  \begin{align*}
    \abs{k(g,h)-1}&= \abs{\sum_{n,m}f_n(g)f_m(h)k_0(g_n,g_m)-\sum_{n,m}f_n(g)f_m(h)}\\
    &\leq \sum_{n,m}f_n(g)f_m(h)\abs{k_0(g_n,g_m)-1}\\
    &<\sum_{n,m}f_n(g)f_m(h)\varepsilon\\
    &=\varepsilon.
  \end{align*}

  To prove (3) and (4) at once, we show that for every $\delta\geq 0$,
  \[\{g^{-1}h\mid \abs{k(g,h)}>\delta\}\subset U^{-1}\left\{\left.g^{-1}h\right\vert \abs{k_0(g,h)}>\delta\right\}U.\]
  The property A case corresponds to the case where $\delta=0$. Let $g,h\in G$ be such that $\abs{k(g,h)}>\delta$. Then there
  is at least one pair $n,m\in\IN$ such that $f_n(g)f_m(h)\not=0$ while $\abs{k_0(g_n,g_m)}>\delta$. But then we get that
  \[g^{-1}h=g^{-1}g_n\, g_n^{-1}g_m\,g_m^{-1}h\in U^{-1}\left\{\left.g_0^{-1}h_0\right\vert \abs{k_0(g_0,h_0)}>\delta\right\}U.\]
\end{proof}

\section{Proper cocycles}
\label{sect:propercocycle}
In this section, we introduce proper cocycles.
Proper cocycles were first introduced by Jolissaint in \cite{Jolissaint:propercocycle}. Our definition is inspired by his notion of proper cocycles,
but we use a slightly weaker version. We did this because we could not show that measure correspondences give rise to proper cocycles
in Jolisaint's sense, but they do give rise to proper cocycles in our sense. Jolissaint's results based on proper cocycles remain valid
for our proper cocycles, as we show in section \ref{sect:proofs}.

\begin{definition}
  Let $G\actson (X,\mu)$ be a non-singular Borel action of a \lcsc group $G$ on a standard probability space, and let $H$ be another \lcsc group.
  \begin{itemize}
  \item A Borel map $\omega:G\times X\rightarrow H$ is called a \emph{cocycle} if for every $g,h\in G$, the relation
    \[\omega(gh,x)=\omega(g,hx)\omega(h,x)\text{ holds for almost every }x\in X.\]
  \item Two cocycles $\omega_1,\omega_2:G\times X\rightarrow H$ are cohomologous if there is a Borel map $\varphi:X\rightarrow H$
    such that for every $g\in G$ separately, we have that
    \[\omega_2(g,x)=\varphi(gx)^{-1}\omega_1(g,x)\varphi(x)\text{ for almost every }x\in X.\]
  \end{itemize}
\end{definition}
Whenever we say that $\omega$ is a cocycle, we mean that $\omega$ is a Borel cocycle. Similarly, when we say that $G\actson (X,\mu)$
is a non-singular action, we mean that the action is Borel and that $(X,\mu)$ is a standard probability space.

\begin{definition}[see also \cite{Jolissaint:propercocycle}]
  \label{def:propercocycle}
  Let $G,H$ be \lcsc groups and let $G\actson (X,\mu)$ be a non-singular action. 
  \begin{itemize}
  \item A cocycle $\omega:G\times X\rightarrow H$ is said
    to be \emph{proper with respect to a family $\cA$ of Borel sets in $X$} if
    \begin{enumerate}
    \item for every compact subset $K\subset G$ and every $A,B\in\cA$, we find a precompact set
      $L(K,A,B)\subset H$ such that, for every $g\in K$ we get that
      \[\omega(g,x)\in L(K,A,B) \text{ for almost all }x\in A\cap g^{-1}B\]
    \item for every compact subset $L\subset H$ and every $A,B\in\cA$, we get that the set $K(L,A,B)$
      of all $g\in G$ such that
      \[\mu\{x\in X\mid x\in A, gx\in B, \omega(g,x)\in L\}>0,\]
      is precompact in $G$.
    \end{enumerate}
  \item A family $\cA$ of Borel sets in $X$ is said to be \emph{large} if it is closed under finite unions and under taking Borel subsets,
    and if for every $\varepsilon>0$ there is a set $A\in \cA$ such that $\mu(X\setminus A)<\varepsilon$.
  \item A cocycle $\omega:G\times X\rightarrow H$ is said to be \emph{proper} if $\omega$ is proper with respect to some large family $\cA$.
  \end{itemize}
\end{definition}

Throughout this section, we will use the following examples of large families.
\begin{observation}
  \label{obs:union}
  Let $(X,\mu)$ be a standard probability space.
  \begin{itemize}
    \item If $\varphi:X\rightarrow H$ is a Borel map into a $\sigma$-compact space, then the family
      \[\cA=\{A\subset X\mid A \text{ is Borel and }\varphi(A)\text{ is precompact}\}\]
      is a large family.
    \item If $(\cA_n)_{n\in\IN}$ is a countable sequence of large families in $X$,
      then the intersection\\
      $\cA=\bigcap_{n\in\IN}\cA_n$ is still large.
    \end{itemize}
\end{observation}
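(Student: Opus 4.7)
The plan is to check each clause of the definition of ``large family'' directly; both parts of the observation are essentially exercises in measure theory and point-set topology, and the real work is just being careful about what ``precompact'' and ``closed under Borel subsets'' entail.

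For the first statement, I would verify the three defining properties of a large family in turn. Closure under Borel subsets is immediate, since if $B\subset A$ is Borel then $\varphi(B)\subset\varphi(A)$, and a subset of a precompact set in a Hausdorff space is itself precompact. Closure under finite unions follows because $\varphi(A_1\cup A_2)=\varphi(A_1)\cup\varphi(A_2)$ and a finite union of precompact sets is precompact. The approximation property is the only point that uses the $\sigma$-compactness hypothesis: write $H=\bigcup_n K_n$ with $K_n$ compact and increasing, so that $X=\bigcup_n\varphi^{-1}(K_n)$ and by continuity of measure from below $\mu(\varphi^{-1}(K_n))\to 1$. Given $\varepsilon>0$, pick $n$ with $\mu(X\setminus\varphi^{-1}(K_n))<\varepsilon$; then $A=\varphi^{-1}(K_n)$ belongs to $\cA$ and has the required measure.

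For the second statement, closure of $\bigcap_n\cA_n$ under finite unions and Borel subsets is clear since each $\cA_n$ enjoys these properties. The approximation step is the only slightly delicate one: given $\varepsilon>0$, choose $A_n\in\cA_n$ with $\mu(X\setminus A_n)<\varepsilon/2^n$, and set $A=\bigcap_n A_n$. Then $A$ is Borel with $\mu(X\setminus A)\leq\sum_n\varepsilon/2^n=\varepsilon$, and for every $n$ we have $A\subset A_n\in\cA_n$, so by closure under Borel subsets $A\in\cA_n$. Hence $A\in\bigcap_n\cA_n$.

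Neither part should present any real obstacle; the only point that requires a moment of thought is making sure that ``closed under Borel subsets'' is used to promote the countable intersection $A=\bigcap_n A_n$ into each $\cA_n$ in the second bullet, rather than trying to argue directly that a countable intersection of sets in $\cA_n$ lies in $\cA_n$ (which is not part of the hypothesis on a large family).
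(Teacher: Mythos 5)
Your proof is correct and follows essentially the same route as the paper's: write $H$ as an increasing union of compacts and pull back for the first bullet, and intersect sets $A_n\in\cA_n$ with $\mu(X\setminus A_n)<\varepsilon/2^n$ for the second, using closure under Borel subsets to place the intersection in each $\cA_n$. (The only cosmetic point: your final bound reads $\mu(X\setminus A)\leq\varepsilon$ rather than $<\varepsilon$, but the strict inequality on each term already gives strictness of the sum, exactly as in the paper.)
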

\begin{proof}
  For the first point, observe that $\cA$ is clearly closed under finite unions and under taking Borel subsets. We can write $H$ as a countable union
  $H=\bigcup_n L_n$ of compact sets. So $X$ is the countable union $X=\bigcup_n A_n$, where $A_n=\varphi^{-1}(L_n)\in\cA$ for all $n$.
  Since $\mu$ is a probability measure, we get that the measure $\mu(X\setminus A_n)$ tends to $0$.

  For the second point, it is clear that $\cA$ is closed under finite unions and under taking Borel subsets. Moreover, because the $\cA_n$
  are closed under taking Borel subsets, we see that
  \[\cA=\bigcap_n \cA_n=\left\{\left.\bigcap_n A_n\right\vert A_n\in\cA_n\right\}.\]
  We only have to show that, for every $\varepsilon>0$,
  there is a set $A\in\cA$ such that $\mu(X\setminus A)<\varepsilon$. Let $\varepsilon>0$. For every $n\in\IN$, take a set $A_n\in\cA_n$
  with measure $\mu(X\setminus A_n)<\frac{1}{2^n}\varepsilon$. Observe that $A=\bigcap_{n=1}^\infty A_n\in \cA$ and that
  \[\mu(X\setminus A)\leq \sum_{n=1}^\infty \mu(X\setminus A_n)<\sum_{n=1}^\infty \frac{1}{2^n}\varepsilon=\varepsilon.\]
\end{proof}

We want to compare our notion of proper cocycle with Jolissaint's notion. In order to do that, we call hist type of proper cocycles ``Jolissaint-proper''.
\begin{definition}[\cite{Jolissaint:propercocycle}]
  \label{def:J-proper}
  Let $G,H$ be \lcsc groups and let $G\actson (X,\mu)$ be a non-singular action. We say that a cocycle $\omega:G\times X\rightarrow H$ is Jolissaint-proper
  if there is a family $\cA$ such that
  \begin{enumerate}
  \item for every $A\in\cA$ and for every compact set $K\subset G$, the set $\omega(K\times A)\subset H$ is precompact.
  \item for every compact set $L\subset H$ and every $A\in\cA$, we get that the set $K_J(L,A)$ of all $g\in G$ such that
    \[\mu\{x\in X\mid x\in A, gx\in A, \omega(g,x)\in L\}>0,\]
    is precompact in $G$.
  \item for every $\varepsilon>0$, there is a set $A\in\cA$ such that $\mu(X\setminus A)<\varepsilon$.
  \end{enumerate}
\end{definition}

Our first observation is that Jolissaint's notion of proper cocycle is not invariant under cohomology, while ours is.
\begin{example}
  \label{ex:non-J}
  Consider the action $\IR\actson \Circle^1=\IR/\IZ$, and the cocycle $\omega:\IR\times\Circle^1\rightarrow \IR$ that is defined by
  $\omega(g,x)=g$. This cocycle is clearly Jolissaint-proper, where we can take $\cA$ to be the family of all Borel subsets of $\Circle^1$.
  Take now any unbounded Borel function $\varphi:\Circle^1\rightarrow\IR$. Then the formula $\omega_1(g,x)=\varphi(gx)^{-1}\omega(g,x)\varphi(x)$
  defines a new cocycle that is cohomologous to $\omega$. But $\omega_1$ is not Jolissaint-proper, because for every fixed $x\in X$, we get that
  $gx$ runs over all of $\Circle^1$ when $g\in K=[0,1]$. So $\omega_1(K\times A)$ is not precompact for any non-empty set $A$.
  This shows that $\omega_1$ can never satisfy condition (1) from definition \ref{def:J-proper}.
\end{example}

\begin{observation}
  \label{obs:cohom}
  Let $G,H$ be \lcsc groups and let $G\actson (X,\mu)$ be a non-singular action.
  Suppose that $\omega_1,\omega_2:G\times X\rightarrow H$ are two cocycles that are cohomologous.
  If $\omega_1$ is proper, then $\omega_2$ is also proper.
\end{observation}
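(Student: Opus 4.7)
The plan is to build a new large family $\cA'$ out of $\cA$ by intersecting with the family of Borel sets on which the conjugating map $\varphi$ behaves well, and then to check the two conditions of properness by reading them through the cohomology relation.

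First I would unpack the definition of cohomologous cocycles to fix a Borel map $\varphi:X\rightarrow H$ such that, for each $g\in G$,
\[\omega_2(g,x)=\varphi(gx)^{-1}\omega_1(g,x)\varphi(x)\text{ for almost every }x\in X.\]
By the first point of observation \ref{obs:union}, the family $\cA_\varphi$ of Borel subsets $A\subset X$ with $\varphi(A)$ precompact is large. By the second point of observation \ref{obs:union}, the intersection $\cA':=\cA\cap\cA_\varphi$ is again a large family. So $\cA'$ is a legitimate candidate to witness properness of $\omega_2$.

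Next I would verify condition (1) of definition \ref{def:propercocycle}. Fix a compact set $K\subset G$ and $A,B\in\cA'$. For $g\in K$ and almost every $x\in A\cap g^{-1}B$, the cohomology relation gives
\[\omega_2(g,x)\in \overline{\varphi(B)}^{-1}\cdot L_1(K,A,B)\cdot \overline{\varphi(A)},\]
where $L_1(K,A,B)$ is the precompact set provided by properness of $\omega_1$ with respect to $\cA$. Since $A,B\in\cA_\varphi$, both $\overline{\varphi(A)}$ and $\overline{\varphi(B)}$ are compact in $H$, so the above product is precompact; this gives the required $L_2(K,A,B)$.

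For condition (2), fix a compact $L\subset H$ and $A,B\in\cA'$. Whenever $x\in A$, $gx\in B$ and $\omega_2(g,x)\in L$, the cohomology relation lets me write
\[\omega_1(g,x)=\varphi(gx)\omega_2(g,x)\varphi(x)^{-1}\in \overline{\varphi(B)}\cdot L\cdot \overline{\varphi(A)}^{-1}=:L'.\]
The set $L'$ is precompact since $A,B\in\cA_\varphi$, hence $K_2(L,A,B)\subset K_1(L',A,B)$, and the latter is precompact by properness of $\omega_1$. This completes both checks, so $\omega_2$ is proper with respect to $\cA'$. The only subtle point is confirming that taking the intersection $\cA\cap\cA_\varphi$ preserves largeness, which is exactly what the second part of observation \ref{obs:union} delivers; apart from that, the proof is just a direct computation with the cohomology formula.
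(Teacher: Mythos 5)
Your proof is correct and follows exactly the paper's argument: pass to the intersection $\cA\cap\cA_\varphi$, which is large by observation \ref{obs:union}, and read the two properness conditions through the cohomology relation. The paper leaves the final verification as ``clear,'' whereas you spell it out; the details you supply are accurate.
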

\begin{proof}
  Let $\varphi:X\rightarrow H$ be a Borel function such that for all $g\in G$, we have that
  $\omega_2(g,x)=\varphi(gx)\omega_1(g,x)\varphi(x)^{-1}$ for almost all $x\in X$.
  Suppose that $\omega_1$ is proper with respect to the large family $\cA$. Consider the large family $\cA_1$
  of all Borel sets $A\subset X$ such that $\varphi(A)$ is a precompact set in $H$. Then it is clear that $\omega_2$ is proper with respect to
  the large family $\cA\cap\cA_1$.
\end{proof}

If a cocycle $\omega$ is proper in our sense, with respect to a family $\cA$, it is clear that $\omega$ is also proper with respect
to the larger family $\overline{\cA}$ that consists of all Borel subsets of finite unions of elements in $\cA$. So we can always assume
that $\omega$ is a proper cocycle with respect to a family that is closed under finite unions and under taking Borel subsets. This last
fact is also true for Jolissaint-properness, though it is a little bit less obvious.
\begin{proposition}
  \label{prop:J-our}
  Let $G,H$ be \lcsc groups and let $G\actson (X,\mu)$ be a non-singular action. If $\omega$ is Jolissaint-proper with respect to
  a family $\cA$, then $\omega$ is also Jolissaint-proper with respect to a large family $\overline{\cA}$.
  In particular, it is a proper cocycle in our sense.
\end{proposition}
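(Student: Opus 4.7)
My plan is to define $\overline{\cA}$ as the collection of all Borel sets $B \subset X$ contained in some finite union $A_1 \cup \cdots \cup A_n$ of elements of $\cA$. This family is automatically closed under Borel subsets and finite unions, and inherits the approximation property (condition (3) of definition \ref{def:J-proper}) from $\cA$; so it is large in the sense of definition \ref{def:propercocycle}.

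Condition (1) of Jolissaint-properness for $\overline{\cA}$ is immediate: for $B \subset A_1 \cup \cdots \cup A_n$ and $K \subset G$ compact, $\omega(K \times B) \subset \bigcup_i \omega(K \times A_i)$ is a finite union of precompacts. The substantive step is condition (2). Since $K_J(L, B) \subset K_J(L, A_1 \cup \cdots \cup A_n)$, it suffices to treat $B = A_1 \cup \cdots \cup A_n$, for which
\[K_J(L, B) = \bigcup_{i,j} T_{ij}, \qquad T_{ij} := \{g \in G : \mu\{x \in A_i : gx \in A_j,\; \omega(g, x) \in L\} > 0\}.\]
The diagonal terms $T_{ii} = K_J(L, A_i)$ are precompact by hypothesis. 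The off-diagonal $T_{ij}$, $i \neq j$, are the heart of the matter. I would handle these by combining the inverse-cocycle symmetry $T_{ij}(L) = T_{ji}(L^{-1})^{-1}$, the uniform image bounds from condition (1) applied to $A_i$ and $A_j$, and condition (3) (producing an auxiliary $A \in \cA$ with $\mu(X \setminus A)$ arbitrarily small, thereby capturing most of any positive-measure witness set $A_i \cap g^{-1} A_j$) so as to dominate the cross terms by the precompact set $K_J(L, A)$ coming from Jolissaint's condition (2) for $A$. This cross-term control is what I expect to be the main technical obstacle; extra care is required because $\mu$ is only quasi-invariant, so the Radon--Nikodym cocycle must be tracked when comparing $\mu(A)$ with $\mu(g^{-1} A)$ for $g$ escaping to infinity.

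The ``in particular'' statement then follows formally. For condition (1) of definition \ref{def:propercocycle} with $A, B \in \overline{\cA}$ and $K \subset G$ compact, take $L(K, A, B) := \omega(K \times A)$, which is precompact; for $x \in A \cap g^{-1} B \subset A$ and $g \in K$ one automatically has $\omega(g, x) \in L(K, A, B)$. For condition (2), closure of $\overline{\cA}$ under finite unions gives $A \cup B \in \overline{\cA}$, and $K(L, A, B) \subset K_J(L, A \cup B)$, which is precompact by Jolissaint's condition (2) just established for $\overline{\cA}$.
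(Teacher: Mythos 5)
There is a genuine gap at exactly the point you flag as ``the heart of the matter'': the off-diagonal sets $T_{ij}$ for $i\neq j$. Jolissaint's condition (2) only controls the diagonal sets $K_J(L,A)$ (both $x$ and $gx$ in the \emph{same} $A$), and none of the three tools you propose closes the gap. The inverse-cocycle symmetry $T_{ij}(L)=T_{ji}(L^{-1})^{-1}$ merely exchanges the roles of $i$ and $j$ and never produces a diagonal term. The auxiliary-set idea fails for two reasons: the witness set $\{x\in A_i: gx\in A_j,\ \omega(g,x)\in L\}$ is only assumed non-null, and its measure can tend to $0$ as $g$ varies, so no single $A$ with $\mu(X\setminus A)<\varepsilon$ is guaranteed to meet every witness set in positive measure; and even if $x\in A$ is arranged, one also needs $gx\in A$, and under mere quasi-invariance $\mu(g^{-1}(X\setminus A))$ is completely uncontrolled as $g$ escapes to infinity --- tracking the Radon--Nikodym cocycle does not repair this, because there is no hypothesis bounding it. So the reduction of $T_{ij}$ to some $K_J(L',A)$ is not achieved, and with your choice of $\overline{\cA}$ (all Borel subsets of finite unions of members of $\cA$) I see no way to achieve it.

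The paper's proof takes a structurally different $\overline{\cA}$ precisely to kill the off-diagonal terms. Since $X$ is covered (up to a null set) by countably many members of $\cA$, one can choose $A_n\in\cA$ whose $G$-saturations $GA_n$ are pairwise disjoint and cover $X$, and let $\overline{\cA}$ consist of all Borel subsets of sets $K_1A_1\sqcup\cdots\sqcup K_nA_n$ with $K_k\subset G$ compact. Disjointness of the saturations forces any pair $x, gx$ lying in $K_kA_k$ and $K_lA_l$ to have $k=l$ (they lie on the same orbit), so only diagonal terms occur; writing $x=h_1x'$, $gx=h_2x''$ with $h_1,h_2\in K_k$ and $x',x''\in A_k$, the cocycle identity and condition (1) on $A_k$ show $\omega(h_2^{-1}gh_1,x')$ lands in the precompact set $L_k=\omega(K_k\times A_k)^{-1}L\,\omega(K_k\times A_k)$, whence $g\in K_kK_J(L_k,A_k)K_k^{-1}$. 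This disjoint-saturation decomposition together with compact translates is the missing idea; your closing ``in particular'' deductions are fine once Jolissaint-properness for a union-closed large family is actually in hand.
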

\begin{proof}
  The only thing that is not clear is that we can assume that $\cA$ is closed under taking finite unions.
  Since $X$ is the countable union of sets in $\cA$ (up to measure 0), we find a sequence of sets $A_n\in\cA$ such
  that $X$ is the countable disjoint union of their saturations, i.e.\@ $X=\sqcup_n GA_n$, up to measure 0. The family $\overline{\cA}$
  is now the set of all Borel subsets of sets of the form $K_1A_1\sqcup\ldots\sqcup K_nA_n$ where $n\in\IN$ and $K_1,\ldots,K_n\subset G$
  are compact. It is clear that $\overline{\cA}$ is a large family. We show that $\omega$ is Jolissaint-proper with respect to $\overline{\cA}$.
  For the first condition, let $K\subset G$ be compact and let $A\subset K_1A_1\sqcup\ldots\sqcup K_nA_n\in\cA$. Then we see that
  \[\omega(K\times A)\subset \bigcup_{k=1}^n\omega(K\times(K_kA_k))\subset \bigcup_{k=1}^n \omega(KK_k\times A_k)\omega(K_k\times A_k)^{-1},\]
  and this last set is precompact.

  For the second condition, let $L\subset H$ be a compact set, and take a set $A\subset\bigsqcup_{k=1}^nK_nA_n$ in $\cA$.
  Consider the precompact sets $L_k=\omega(K_k\times A_k)^{-1}L\,\omega(K_k\times A_k)$ in $H$ and define
  $K=\bigcup_{k=1}^n K_kK_J(L_k,A_k)K_k^{-1}$. Observe that $K$ is precompact. Let $g\in G$ be such that
  \[\mu\{x\in X\mid x\in A, gx\in A, \omega(g,x)\in L\}>0.\]
  We show that $g\in K$. For every $x\in X$ with $x\in A$, $gx\in A$ and $\omega(g,x)\in L$, we find
  $k,l\leq n$ and $h_1\in K_k, h_2\in K_l$ such that $h_1^{-1}x\in A_k$ and $h_2^{-1}gx\in A_l$. Because the
  saturations of $A_k,A_l$ are disjoint for different $k,l$, we see that $k=l$. So $h_2^{-1}gh_1$ is an element in $G$
  for which the measure
  \[\mu\{x\in X\mid x\in A_k, h_2^{-1}gh_1x\in A_k, \omega(h_2^{-1}gh_1,x)\in L_k\}>0.\]
  So $g\in K_kK_J(L_k,A_k)K_k^{-1}\subset K$.
\end{proof}

\section{Measure correspondences}
In this section, we introduce measure correspondences between arbitrary \lcsc groups. This is used to define measure equivalence
between \lcsc groups. We show that this more general notion coincides with the classical one for unimodular groups.
A good exposition of the unimodular case is given in \cite[Appendix A]{FurmanBaderSauer:MErigidity}. We follow a similar strategy in the general case.

\begin{definition}
  \label{def:MC}
  Let $G,H$ be two \lcsc groups and let $G\times H\actson (\Omega,\eta)$ be a non-singular action. In the following
  we consider the Haar measure on $G,H$.
  We say that $\Omega$ is a measure $G$-$H$-correspondence if there exist standard probability spaces $(X,\mu)$ and $(Y,\nu)$ and (almost everywhere defined) measure class preserving Borel isomorphisms
  $\varphi:X\times H\rightarrow \Omega$ and $\psi:Y\times G\rightarrow \Omega$ such that for all $h\in H$ we have that $\varphi(x,hk)=h\varphi(x,k)$ for almost all $(x,k)\in X\times H$,
  and such that for every $g\in G$, we have that $\psi(y,gk)=g\psi(y,k)$ for almost all $(y,k)\in Y\times G$.
\end{definition}

Standard examples of measure correspondences are the following:
\begin{itemize}
\item For every \lcsc group $G$ with Haar measure, we have the \emph{identity $G$-$G$-correspondence} $\Omega=G$
  with the left-right action of $G\times G$.
\item When $H_1,H_2\subset G$ are closed subgroups, then we find an $H_1$-$H_2$-correspondence $\Omega=G$ with the left action of $H_1$
  and the right action of $H_2$.
\item For every two \lcsc groups $G,H$ with Haar measure, we define the \emph{coarse $G$-$H$-correspondence} $\Omega=G\times H$
  with the left translation action of $G\times H$.
\item A non-singular action $G\actson (X,\mu)$, induces a $G$-$G$-correspondence $\Omega=X\times G$ with
  an action of $G\times G$ that is defined by $(g,h)\cdot (x,k)=(gx,gkh^{-1})$.
\end{itemize}

In the rest of this section, all Borel maps are almost everywhere defined, and on all the \lcsc groups we consider a probability measure
that is equivalent to the Haar measure. We only consider equality almost everywhere.
\begin{observation}
  \label{obs:comm}
  Let $H$ be a \lcsc group and let $(X,\mu)$ be a standard probability space.
  Let $\alpha_1:X\rightarrow X$ be a measure class preserving Borel isomorphism, and let $\alpha_2:X\rightarrow H$
  be a Borel map. Then the formula
  \[\alpha(x,h)=\left(\alpha_1(x),h\alpha_2(x)^{-1}\right)\text{ for almost all }(x,h)\in X\times H\]
  defines a measure class preserving Borel isomorphism of $X\times H$ that commutes with the action of $H$.
  Moreover, all measure class preserving Borel isomorphisms of $X\times H$ that commute with the action of $H$
  are of this form.
\end{observation}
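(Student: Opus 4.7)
My plan is to handle the two implications separately. The \emph{existence} direction is a direct computation: the map $\alpha(x,h)=(\alpha_1(x),h\alpha_2(x)^{-1})$ is manifestly Borel, it commutes with the $H$-action $k\cdot(x,h)=(x,kh)$ inherited from the definition of measure correspondences since $\alpha(x,kh)=(\alpha_1(x),kh\alpha_2(x)^{-1})=k\cdot\alpha(x,h)$, and it factors as the fiberwise right-translation $(x,h)\mapsto(x,h\alpha_2(x)^{-1})$ (which preserves the measure class by quasi-invariance of Haar measure under translation) followed by the $\alpha_1$-shuffle on the $X$-factor, so it is measure class preserving. Its inverse $\alpha^{-1}(y,m)=(\alpha_1^{-1}(y),m\alpha_2(\alpha_1^{-1}(y)))$ is of the same form, so $\alpha$ is a Borel isomorphism.

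For the converse, let $\alpha\colon X\times H\to X\times H$ be a measure class preserving Borel isomorphism commuting with the $H$-action, and write $\alpha(x,h)=(\beta_1(x,h),\beta_2(x,h))$ for Borel maps $\beta_1\colon X\times H\to X$ and $\beta_2\colon X\times H\to H$. The commutation condition gives, for every $k\in H$,
\[\beta_1(x,kh)=\beta_1(x,h)\quad\text{and}\quad\beta_2(x,kh)=k\beta_2(x,h)\text{ for a.e.\ }(x,h).\]
I would first deduce that $\beta_1$ is essentially independent of its second argument: Fubini's theorem applied to the full-measure set $\{(x,h,k)\mid\beta_1(x,kh)=\beta_1(x,h)\}$ shows that, for almost every $x$, the slice function $\beta_1(x,\cdot)$ is left-translation-invariant as a measurable map on $H$, hence essentially constant. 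A measurable selection (or equivalently, averaging the map $h\mapsto\beta_1(x,h)$ against a Borel probability density on $H$) then yields a Borel map $\alpha_1\colon X\to X$ with $\beta_1(x,h)=\alpha_1(x)$ for almost every $(x,h)$.

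For $\beta_2$ I would employ a symmetrization trick: set $\gamma(x,h):=h^{-1}\beta_2(x,h)$. Then a direct computation shows $\gamma(x,kh)=(kh)^{-1}\cdot k\beta_2(x,h)=h^{-1}\beta_2(x,h)=\gamma(x,h)$, so $\gamma$ satisfies exactly the translation invariance that $\beta_1$ did. Applying the argument from the previous paragraph to $\gamma$ produces a Borel $\alpha_2\colon X\to H$ with $\gamma(x,h)=\alpha_2(x)^{-1}$ almost everywhere, whence $\beta_2(x,h)=h\alpha_2(x)^{-1}$. Finally, decomposing $\alpha^{-1}$ by the same recipe produces a Borel inverse for $\alpha_1$ on $X$, and preservation of the measure class on $X$ follows by pushing the statement for $\alpha$ forward under the projection $X\times H\to X$. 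I expect the main (albeit mild) obstacle to be the usual null-set bookkeeping, namely promoting the essentially-constant-on-slices property to genuine Borel representatives $\alpha_1,\alpha_2$, which is handled by a standard measurable selection argument on the standard Borel space $X$ and the \lcsc group $H$.
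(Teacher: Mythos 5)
Your proof is correct and follows essentially the same route as the paper's: decompose $\alpha$ into its two components, use $H$-equivariance to conclude each is essentially independent of the $H$-variable, and read off $\alpha_1,\alpha_2$ (the paper absorbs your symmetrization $\gamma(x,h)=h^{-1}\beta_2(x,h)$ by writing the second component in the form $h\alpha_2(x,h)^{-1}$ from the outset). The paper's version is merely terser, leaving the Fubini/measurable-selection bookkeeping that you spell out implicit.
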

\begin{proof}
  It is clear that every $\alpha$ of this form is a measure class preserving Borel isomorphism that commutes with the action of $H$.

  When $\alpha$ is a measure class preserving Borel isomorphism, then
  we find Borel maps\\ \hbox{$\alpha_1:X\times H\rightarrow X$} and $\alpha_2:X\times H\rightarrow H$ such that $\alpha(x,h)=(\alpha_1(x,h),h\alpha_2(x,h)^{-1})$ for all $(x,h)\in X\times H$.
  When $\alpha$ commutes with the action of $H$, these maps $\alpha_1,\alpha_2$ are invariant under the action of $H$, so they essentially depend only on $x$. The map $\alpha_1$
  is a measure class preserving Borel isomorphism because $\alpha$ is.
\end{proof}

Suppose that $\Omega$ is a measure correspondence between two \lcsc groups $G,H$. Then there exists a measure class preserving Borel isomorphism $\varphi:X\times H\rightarrow \Omega$
that commutes with the action of $H$. By observation \ref{obs:comm}, we find a non-singular action $G\actson X$
and a Borel cocycle $\omega:G\times X\rightarrow H$ such that
\[g\varphi(x,h)=\varphi\left(gx,h\omega(g,x)^{-1}\right)\text{ for almost all }(x,h)\in X\times H.\]
This action and cocycle are completely determined by the map $\varphi$.
We say that two non-singular actions $G\actson (X,\mu)$ and $G\actson (X^\prime,\mu^\prime)$ are isomorphic if there
is a measure class preserving Borel isomorphism $\Delta:X\rightarrow X^\prime$ that commutes with the action of $G$.
Choosing a different Borel isomorphism $\varphi$
yields an isomorphic action of $G$ and a cohomologous cocycle $\omega$. So the action and cocycle are well-defined by $\Omega$ up to isomorphism
and cohomology. Similarly, the Borel isomorphism $\psi:Y\times G\rightarrow \Omega$ yields a non-singular action $H\actson Y$ and a cocycle
$\beta:H\times Y\rightarrow G$.
We say that the actions $G\actson X$, $H\actson Y$ and the cocycles $\omega:G\times X\rightarrow H$ and $\beta:H\times Y\rightarrow G$
are associated to $\Omega$.

\def\niets{
These two cocycles are each others inverse in the sense of the following lemma.
\begin{lemma}
  There exist Borel maps $s:X\rightarrow G$ and $u:X\rightarrow Y$ such that for all $g\in G$ we have that
  \begin{align*}
    u(gx)&=\omega(g,x)\text{ for almost all }x\in X
    g&=s(gx)^{-1}\beta(\omega(g,x),u(x))s(x)\text{ for almost all }x\in X.
  \end{align*}
  Moreover, for every set $V\subset Y$ with measure $1$, there is $h\in H$ such that $u^{-1}(h^{-1}V)$ has measure 1.
\end{lemma}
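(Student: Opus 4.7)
The plan is to work with the composite Borel isomorphism $\Phi = \psi^{-1} \circ \varphi : X \times H \to Y \times G$ and to read off the maps $u$ and $s$ from its restriction to $X \times \{e\}$. First I would note that $\Phi$ intertwines the two $H$-actions: on $X\times H$, $H$ acts by $h\cdot(x,k)=(x,hk)$, while on $Y\times G$ the identity $h\psi(y,k)=\psi(hy,k\beta(h,y)^{-1})$ (derived as in observation \ref{obs:comm}) gives the twisted action $h\cdot(y,k)=(hy,k\beta(h,y)^{-1})$. Defining Borel maps $u:X\to Y$ and $s:X\to G$ by writing $\Phi(x,e)=(u(x),s(x)^{-1})$, $H$-equivariance then yields $\Phi(x,h)=(hu(x),s(x)^{-1}\beta(h,u(x))^{-1})$.

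To derive the cocycle-type relations I would invoke $G$-equivariance. The $G$-action on $X\times H$ is $g\cdot(x,h)=(gx,h\omega(g,x)^{-1})$, and on $Y\times G$ it is $g\cdot(y,k)=(y,gk)$. Computing $\Phi(gx,\omega(g,x)^{-1})$ in two ways (first by applying $g$ to $\Phi(x,e)$, second by substituting $h=\omega(g,x)^{-1}$ into the formula for $\Phi$ above) and comparing the two coordinates gives $u(gx)=\omega(g,x)u(x)$ (which I expect to be the intended form of the first displayed equation; as stated the types do not match) together with $s(gx)\,g\,s(x)^{-1}=\beta(\omega(g,x)^{-1},u(gx))^{-1}$. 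Using the cocycle identity $\beta(\omega(g,x)^{-1},u(gx))\beta(\omega(g,x),u(x))=\beta(e,u(x))=e$ converts this into $g=s(gx)^{-1}\beta(\omega(g,x),u(x))s(x)$, as required.

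For the ``moreover'' statement I would argue by Fubini. Let $V\subset Y$ have measure $1$; then $V\times G$ has full measure in $Y\times G$, so its preimage under $\Phi$ has full measure in $X\times H$. Since
\[
  \Phi^{-1}(V\times G) = \{(x,h)\in X\times H \mid hu(x)\in V\} = \{(x,h)\in X\times H \mid u(x)\in h^{-1}V\},
\]
Fubini produces (in fact almost every) $h\in H$ for which the $h$-slice $u^{-1}(h^{-1}V)$ has full measure in $X$.

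The main subtlety I anticipate is that all these identities hold only almost everywhere in $(x,h)$ or $(g,x)$, whereas the statement asks for equalities that hold for every $g\in G$ separately. As usual for cocycle arguments, this requires a Fubini step to pass from joint almost-everywhere statements to pointwise-in-$g$, almost-everywhere-in-$x$ statements, and one must normalize both cocycles $\omega$ and $\beta$ so that $\omega(e,x)=e$ and $\beta(e,y)=e$ hold almost everywhere before applying the identity above.
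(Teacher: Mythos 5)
Your proposal is correct and follows essentially the same route as the paper: the paper also reads $u$ and $s$ off the composite $\psi^{-1}\circ\varphi$ by writing $\varphi(x,h)=\psi(h\,u_0(x,h),s_0(x,h)\beta(h,u_0(x,h))^{-1})$, using $H$-invariance of $u_0,s_0$ to reduce them to functions of $x$ alone (which is the rigorous version of your evaluation at the null slice $X\times\{e\}$), and then comparing the two expressions for $g\varphi(x,h)$ exactly as you do; you also correctly identify the typo $u(gx)=\omega(g,x)u(x)$. Your Fubini argument for the ``moreover'' clause is the intended one (and is in fact the only part the paper's own write-up leaves implicit).
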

\begin{proof}
  By Mackey's cocycle theorem, we can assume that $\omega,\beta$ are strict cocycles, i.e.\@ they satisfy the relations in definition \ref{def:cocycles}
  for all $x\in X$ instead of only almost all $x\in X$.
  Fix measure class preserving
  Borel isomorphisms $\varphi:X\times H\rightarrow \Omega$ and $\psi:Y\times G\rightarrow \Omega$ as in observation \ref{obs:cocycles}.
  Because $\psi$ commutes with the $H$-action, we find unique Borel maps $s:X\rightarrow G$ and $u:X\rightarrow Y$
  such that
  \[\psi(x,h)=h\varphi(u(x),s(x))\text{ for almost all }(x,h)\in X\times H.\]

  For every $g\in G$, we compute that for almost all $(x,h)\in X\times H$,
  \begin{align*}
    g\psi(x,h)&=\psi(gx,h\omega(g,x)^{-1})\\
    \Vert\qquad&\qquad\Vert\\
    h\varphi(u(x),gs(x))&=h\omega(g,x)^{-1}\varphi(u(gx),s(gx))\\
    \Vert\qquad&\qquad\Vert\\
    \varphi(hu(x),gs(x)\beta(h,u(x))^{-1})&=\varphi(h\omega(g,x)^{-1}u(gx),s(gx)\beta(h\omega(g,x)^{-1},u(gx))^{-1})
  \end{align*}
  It follows that for all $g\in G$ we have that for almost all $(x,h)\in X\times H$
  \begin{align*}
    u(gx)&=\omega(g,x)u(x)\text{ for almost all }x\in X\\
    gs(x)\beta(h,u(x))^{-1}&=s(gx)\beta(\omega(g,x)^{-1},u(gx))^{-1}\beta(h,\omega(g,x)^{-1}u(gx))^{-1}\\
    &=s(gx)\beta(\omega(g,x),u(x))\beta(h,u(x))^{-1}
  \end{align*}
\end{proof}}

There are two important operations on measure correspondences: composition and the opposite.
\begin{definition}
  Let $G,H,K$ be \lcsc groups.
  \begin{itemize}
  \item If $\Omega$ is a measure correspondence between $G,H$, then the opposite measure correspondence $\overline{\Omega}$
    between $H$ and $G$ is $\overline{\Omega}=\Omega$ with the obvious action of $H\times G$.
  \item If $\Omega_1$ and $\Omega_2$ are measure correspondences between $G,H$ and $H,K$ respectively. Then we define
    $\Omega_1\otimes_H\Omega_2$ to be the quotient of $\Omega_1\times\Omega_2$ by the action of $H$ that is given by $h\cdot(x,y)=(hx,hy)$.
    On $\Omega_1\otimes_H\Omega_2$ we consider the probability measure that is induced by the quotient map from $\Omega_1\times\Omega_2$,
    and together with the action of $G\times K$ that is given by $(g,k)\cdot(x,y)=(gx,ky)$. Proposition \ref{prop:comp} below shows that $\Omega_1\otimes_H\Omega_2$
    is a measure correspondence between $G,K$.
  \end{itemize}
\end{definition}

\begin{proposition}
  \label{prop:comp}
  Let $G,H,K$ be \lcsc groups and let $(\Omega_1,\eta_1)$ and $(\Omega_2,\eta_2)$ be measure correspondences between $G,H$ and $H,K$ respectively.
  Let $\varphi_1:X_1\times H\rightarrow \Omega_1$, $\psi_1:Y_1\times G\rightarrow \Omega_2$, $\varphi_2:X_2\times K\rightarrow\Omega_2$ and $\psi_2:Y_2\times H\rightarrow \Omega_2$
  be measure class preserving Borel isomorphisms as in definition \ref{def:MC}.
  Consider the quotient map $\pi:\Omega_1\times\Omega_2\rightarrow \Omega_1\otimes_H\Omega_2=\Omega$.
  Then the following maps are measure class preserving Borel isomorphisms.
  \begin{align*}
    \varphi:X_1\times X_2\times K\rightarrow \Omega:&\varphi(x_1,x_2,k)=\pi(\varphi_1(x_1,e),\varphi_2(x_2,k))\\
    \psi:Y_1\times Y_2\times G\rightarrow \Omega:&\psi(y_1,y_2,g)=\pi(\psi_1(y_1,g),\psi_2(y_2,e))
  \end{align*}
  The map $\varphi$ clearly commutes with action of $K$ and $\psi$ commutes with the action of $G$. This shows that $\Omega_1\otimes_H\Omega_2$ is a measure correspondence.
\end{proposition}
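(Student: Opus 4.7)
The plan is to verify that $\varphi$ is a measure class preserving Borel isomorphism by explicitly producing its inverse; the analogous statement for $\psi$ then follows by symmetry, and the $K$- and $G$-equivariances of $\varphi$ and $\psi$ are straightforward. First, I would use $\varphi_1\times\id_{\Omega_2}$ to identify $\Omega_1\times\Omega_2$, up to measure class, with $X_1\times H\times\Omega_2$. Under this identification, the diagonal $H$-action whose orbits are the fibres of $\pi$ takes the form $h\cdot(x_1,k,\omega_2)=(x_1,hk,h\omega_2)$. Every $H$-orbit contains a unique representative whose middle coordinate is $e$, namely $(x_1,e,k^{-1}\omega_2)$, obtained by acting by $k^{-1}$. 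This gives a Borel parameterization of the orbit space by $X_1\times\Omega_2$ via $[\varphi_1(x_1,e),\omega_2]\leftrightarrow(x_1,\omega_2)$.

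Next, I would apply the second Borel isomorphism $\varphi_2:X_2\times K\to\Omega_2$ on the second factor to produce a measure class preserving Borel isomorphism $X_1\times X_2\times K\to X_1\times\Omega_2\to\Omega$ that sends $(x_1,x_2,k)$ to $[\varphi_1(x_1,e),\varphi_2(x_2,k)]$; this is exactly $\varphi$. Its $K$-equivariance then reduces to that of $\varphi_2$ together with the fact that $K$ acts only on the second coordinate of the tensor product:
\[\varphi(x_1,x_2,kk')=\pi(\varphi_1(x_1,e),\varphi_2(x_2,kk'))=\pi(\varphi_1(x_1,e),k\varphi_2(x_2,k'))=k\cdot\varphi(x_1,x_2,k').\]
A completely symmetric argument, this time using $\psi_2$ to slice out the $H$-action via $\Omega_2$ and then using $\psi_1$ to parameterize $\Omega_1$, shows that $\psi$ is a measure class preserving Borel isomorphism intertwining the $G$-action.

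The part that I expect to need the most care is the measure-theoretic bookkeeping. The probability measure on $\Omega$ is built from $\eta_1\otimes\eta_2$ and implicitly uses Haar measure on $H$ to disintegrate along orbits; to verify that the orbit-representative map above is measure class preserving, one has to invoke quasi-invariance of Haar measure on $H$ under left multiplication, combined with Fubini. One also has to remember that the inverse constructed above is defined only almost everywhere, and that equivariance statements hold per group element for almost every argument, as in definition \ref{def:MC}; these subtleties can be handled by standard Mackey-style arguments already invoked elsewhere in this section.
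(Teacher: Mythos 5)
Your proof is correct and follows essentially the same route as the paper: normalizing the $H$-coordinate of the first (resp.\@ second) factor to $e$ to pick an orbit representative is exactly the paper's explicit inverse $\varphi'_0(x,y)=(u(x),\varphi_2^{-1}(s(x)^{-1}y))$, where $\varphi_1^{-1}(x)=(u(x),s(x))$. Your extra attention to the equivariance computation and the measure-class bookkeeping is fine but not a different argument.
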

\begin{proof}
  By definition, it is clear that $\varphi,\psi$ are Borel maps such that $\varphi^{-1}(E)$ has measure $0$ for every set $E$ of measure $0$, and similar for $\psi$.
  So it suffices to give an inverse Borel map $\varphi^\prime$ for $\varphi$ and $\psi^\prime$ for $\psi$.
  If we write $\varphi_1^{-1}:\Omega_1\rightarrow X_1\times H$ as $\varphi_1^{-1}(x)=(u(x),s(x))$ for almost all $x\in\Omega_2$, then we can define
  $\varphi^\prime_0:\Omega_1\times\Omega_2\rightarrow X_1\times X_2\times K$ by
  \[\varphi^\prime_0(x,y)=(u(x),\varphi_2^{-1}(s(x)^{-1}y)).\]
  This function $\varphi^\prime_0$ is invariant under the action of $H$ and hence defines a map $\varphi^\prime:\Omega_1\otimes_H\Omega_2\rightarrow X_1\times X_2\times K$.
  An elementary computation shows that $\varphi^\prime$ is the inverse of $\varphi$.
  The same idea works for $\psi$.
\end{proof}

\begin{definition}
  \label{def:ME}
  Let $(\Omega,\eta)$ be a measure correspondence between \lcsc groups $G,H$. Consider the non-singular actions $G\actson (X,\mu)$ and $H\actson (Y,\nu)$
  associated to $\Omega$. We say that $G\actson (X,\mu)$ has an invariant probability measure if there exists a $G$-invariant probability measure on $X$ that is equivalent with $\mu$.
  \begin{itemize}
  \item We say that $\Omega$ is a measure equivalence coupling if both $G\actson (X,\mu)$ and $H\actson (Y,\nu)$ have invariant probability measures. In that case we say that $G$ and $H$ are measure equivalent.
  \item We say that $\Omega$ is a measure equivalence subgroup coupling if $G\actson (X,\mu)$ has an invariant probability measure. In that case we say that $G$ is a measure equivalence subgroup of $H$.
  \end{itemize}
\end{definition}

It is now easy to see that the composition of two measure equivalence (subgroup) couplings is still a measure equivalence (subgroup) coupling.

We show that definition \ref{def:ME} is equivalent to the classical one for unimodular groups.
\begin{proposition}
  \label{thm:MEdef}
  Let $G,H$ be unimodular \lcsc groups and let $(\Omega,\eta)$ be a measure equivalence coupling.
  Consider standard probability spaces $(X,\mu)$ and $(Y,\nu)$ together with measure class preserving
  Borel isomorphisms $\varphi:X\times H\rightarrow\Omega$ and $\psi:Y\times G\rightarrow\Omega$ as in definition \ref{def:MC}.
  Then the following measures exist:
  \begin{itemize}
  \item An infinite $G\times H$-invariant measure $\eta^{\prime}$ on $\Omega$, that is equivalent to $\eta$.
  \item Finite measures $\mu^\prime,\nu^\prime$ on $X,Y$ that are equivalent to $\mu,\nu$ and such that
    the Borel isomorphisms
    \begin{align*}
      \varphi&:(X,\mu^\prime)\times (H,\mu_H)\rightarrow (\Omega,\eta^\prime)\\
      \psi&:(Y,\nu^\prime)\times (G,\mu_G)\rightarrow (\Omega,\eta^\prime)
    \end{align*}
    are measure preserving. The measures $\mu_G$, $\mu_H$ are Haar measures on $G$, $H$.
  \end{itemize}
  In particular, $(\Omega,\eta^\prime)$ is a measure equivalence coupling in the sense of \cite[definition 1.1]{FurmanBaderSauer:MErigidity}.
\end{proposition}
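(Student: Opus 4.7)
The plan is to build $\eta'$ by comparing two natural $G\times H$-invariant candidates, one coming from $\varphi$ and one from $\psi$, and then taking a weighted combination chosen so that both marginals are finite. Let $\mu'_0$ be the $G$-invariant probability measure on $X$ equivalent to $\mu$ and $\nu'_0$ the $H$-invariant probability measure on $Y$ equivalent to $\nu$, both furnished by Definition \ref{def:ME}. First I would define $\eta'_1 := \varphi_*(\mu'_0\times\mu_H)$ and $\eta'_2 := \psi_*(\nu'_0\times\mu_G)$; these are $\sigma$-finite and equivalent to $\eta$. The $H$-invariance of $\eta'_1$ is immediate from $H$-equivariance of $\varphi$ and left-invariance of $\mu_H$. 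For $G$-invariance I would unfold $(g_0)_*\eta'_1$ using the cocycle formula $g\varphi(x,h)=\varphi(gx,h\omega(g,x)^{-1})$, substitute $h\mapsto h\omega(g_0,x)^{-1}$ invoking right-invariance of $\mu_H$ (this is where unimodularity of $H$ enters), and then use $G$-invariance of $\mu'_0$. The symmetric computation, now using unimodularity of $G$, shows $\eta'_2$ is $G\times H$-invariant.

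Next I would extract a $G\times H$-invariant positive function $\tilde F$ on $\Omega$ with $\eta'_2 = \tilde F\cdot\eta'_1$. To this end I analyze $\varphi^*\eta'_2$ on $X\times H$: it is $\sigma$-finite, equivalent to $\mu'_0\times\mu_H$, and invariant under left translation on the $H$-factor because $\eta'_2$ is $H$-invariant and $\varphi$ is $H$-equivariant. Disintegrating along the projection $X\times H\to X$, each fibre measure on $H$ is left-invariant and $\sigma$-finite, hence a scalar multiple of $\mu_H$ by uniqueness of Haar measure, so $\varphi^*\eta'_2 = (F\mu'_0)\times\mu_H$ for a positive measurable $F:X\to(0,\infty)$. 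The $G$-invariance of $\eta'_2$ combined with the cocycle formula and the $G$-invariance of $\mu'_0$ then forces $F$ itself to be $G$-invariant. Applying the same disintegration to $\psi^*\eta'_1$ yields $\psi^*\eta'_1 = (F'^{-1}\nu'_0)\times\mu_G$ for an $H$-invariant positive measurable $F':Y\to(0,\infty)$. The function $\tilde F$ defined on $\Omega$ by $\tilde F\circ\varphi(x,h) = F(x)$, equivalently $\tilde F\circ\psi(y,g)=F'(y)$, is then $G\times H$-invariant and equals $d\eta'_2/d\eta'_1$.

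Finally, I would set
\[\eta' := \frac{\tilde F}{1+\tilde F}\,\eta'_1 = \frac{1}{1+\tilde F}\,\eta'_2,\]
which is $G\times H$-invariant and equivalent to $\eta$ since $\tilde F/(1+\tilde F)$ is a strictly positive $G\times H$-invariant function. Unwinding the two descriptions gives $\eta' = \varphi_*(\mu'\times\mu_H)$ with $\mu':=\frac{F}{1+F}\mu'_0$, and $\eta' = \psi_*(\nu'\times\mu_G)$ with $\nu' := \frac{1}{1+F'}\nu'_0$. Both $\mu'$ and $\nu'$ are dominated pointwise by the probability measures $\mu'_0,\nu'_0$, so they are finite; they are equivalent to $\mu,\nu$ because $F,F'$ are strictly positive; and they are $G$- and $H$-invariant respectively since $\mu'_0,\nu'_0$ and $F,F'$ are. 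The main obstacle I anticipate is the disintegration step producing a Borel $F$ with honest, rather than merely almost-sure, $G$-invariance; this is a standard argument but is exactly where the unimodularity hypotheses are unavoidable.
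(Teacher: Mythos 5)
Your proposal is correct and follows essentially the same route as the paper: push forward $\mu'_0\times\mu_H$ and $\nu'_0\times\mu_G$ to obtain two equivalent $G\times H$-invariant measures on $\Omega$ (invariance resting on unimodularity), take the Radon--Nikodym derivative between them, and dampen by a function of it to get a single invariant measure dominated by both, whose pullbacks are then products of finite measures with Haar measure. The paper uses $\min(1,f)$ where you use $f/(1+f)$, and it identifies $\mu',\nu'$ a posteriori from $H$-invariance of $\varphi_*^{-1}\eta'$ rather than via your explicit disintegration, but these are cosmetic differences.
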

\begin{proof}
  Consider the actions $G\actson (X,\mu)$, $H\actson (Y,\nu)$ and cocycles $\omega,\beta$ associated to $\Omega$.
  Since $\Omega$ is a measure equivalence coupling, we can assume that $\mu$ and $\nu$ are invariant
  probability measures on $X$ and $Y$.
  Because $H$ is unimodular, we see that the action $G\times H\actson (X\times H)$ defined by
  $(g,h)\cdot(x,k)=(gx,hk\omega(g,x)^{-1})$
  preserves the measure $\mu\times \mu_H$ where $\mu_H$ denotes the Haar measure on $H$.
  The push-forward measure $\eta_1=\varphi_\ast(\mu\times\mu_H)$
  is preserved by the $G\times H$ action on $\Omega$.
  Similarly, $\eta_2=\psi_{\ast}(\nu\times\mu_G)$ is a $G\times H$-invariant measure on $\Omega$
  that is equivalent to $\eta_1$. So we find a measurable $G\times H$-invariant function
  $f:\Omega\rightarrow (0,\infty)$ such that $d\eta_2(x)=f(x)d\eta_1(x)$. Consider the $G\times H$-invariant
  measure $\eta^\prime$ on $\Omega$ that is defined by $d\eta^\prime(x)=\min(1,f(x))d\eta_1(x)$.
  Then $\eta^\prime$ is still $G\times H$-invariant and equivalent to $\eta$, and $\eta^\prime$ is smaller than both $\eta_1$ and $\eta_2$.

  The measure $\varphi_\ast^{-1}\eta^\prime$ is an $H$-invariant measure on $X\times H$, so it is of the form
  $\mu^\prime\times \mu_H$ with $\mu^\prime$ equivalent to $\mu$. Moreover, $\mu^\prime$ is smaller that $\mu$
  so it is still a finite measure, and $\varphi:(X,\mu^\prime)\times (H,\mu_H)\rightarrow (\Omega,\eta^\prime)$
  is measure preserving. Similarly we find a finite measure $\nu^\prime$ such that
  $\psi:(Y,\nu^\prime)\times (G,\mu_G)\rightarrow(\Omega,\eta^\prime)$ is measure preserving.
\end{proof}

\section{Measure correspondences give rise to proper cocycles}
In this section, we show that measure correspondences give rise to proper cocycles.
For discrete groups, this is relatively easy, see \cite{Jolissaint:permanence}. We give a complete proof for the general case.
\begin{theorem}
  \label{thm:MC}
  Let $G,H$ be \lcsc groups and let $(\Omega,\eta)$ be a measure correspondence between $G$ and $H$.
  Consider the non-singular actions $G\actson (X,\mu)$
  and $H\actson (Y,\nu)$ associated with $\Omega$, together with the cocycles
  $\omega:G\times X\rightarrow H$ and $\beta:H\times Y\rightarrow G$.
  Then both $\omega$ and $\beta$ are proper cocycles.
\end{theorem}

\begin{proof}
  By symmetry, we only have to show that $\omega$ is a proper cocycle. 
  By results of Mackey, we can assume
  that the actions $G\actson (X,\mu)$ and $H\actson (Y,\nu)$ are everywhere defined Borel actions \cite{Mackey:actions},
  and by the Mackey cocycle theorem, we can assume that $\omega$ and $\beta$ are 
  strict Borel cocycles. This means that the cocycle relation
  $\omega(gh,x)=\omega(g,hx)\omega(h,x)$
  holds for all $g,h\in G$ and all $x\in X$ instead of almost all $x\in X$, and similarly for $\beta$.


  We proceed in three steps.
  \begin{step}
    For every compact set $K\subset G$, there is an increasing sequence $(A_n)_n$ of Borel sets in $X$
    with $X=\bigcup_n A_n$ and such that for all $n\in\IN$, the set
    \[\{\omega(g,x)\mid x\in A_n, gx\in A_n, g\in K\}\]
    is precompact.
  \end{step}
  Take an increasing sequence of compact sets $L_n\subset H$ such that $H=\bigcup_n L_n$.
  Denote $U_n=\omega^{-1}(L_n)\subset G\times X$. For every $x\in X$ we denote
  $U_{n,x}=\{g\in G\mid (g,x)\in U_n\}$.

  Consider the left Haar measure on $G$ and
  fix a strictly positive Borel function $f:G\rightarrow \IR$, with $\norm{f}_2=1$.
  For every $n\in\IN$ and $x\in X$, we write $f_{n,x}=\chara_{U_{n,x}}f$.
  It follows that, for every $x\in X$, we get that
  \[\norm{f_{n,x}-f}_2\rightarrow 0\text{ when }n\rightarrow\infty.\]

  For two functions $f_1,f_2\in\Lp^2(G)$, we define a function $f_1\star f_2:G\rightarrow\IR$ by
  the formula
  \[(f_1\star f_2)(g)=\int_G f_1(hg^{-1})f_2(h) dh.\]
  It is easy to see that $\star$ is bilinear and that $\abs{(f_1\star f_2)(g)}\leq \Delta(g)\norm{f_1}_2\norm{f_2}_2$
  for all $g\in G$, where $\Delta_G$ is the modular function of $G$.
  Using a similar argument as in \cite[theorem 8.8]{Folland}, it is easy to see that $f_1\star f_2$
  is a continuous function.
  
  Consider the functions $\tilde f=f\star f$ and $\tilde f_{n,x,y}=f_{n,y}\star f_{n,x}$.
  By the above, these functions are continuous and positive. Moreover, $\tilde f$ is strictly positive.
  Hence $\varepsilon=\min\{\tilde f(g)\Delta_G(g)^{-1}\mid g\in K\}$ is strictly positive. Moreover, we see that
  \[\abs{(\tilde f-\tilde f_{n,x,y})(g)}\leq \Delta_G(g)(\norm{f-f_{n,y}}_2+\norm{f-f_{n,x}}_2),\]
  for all $n\in\IN$ and $x,y\in X$.

  For every $n\in\IN$, we define an increasing sequence of Borel sets $A_n$ in $X$ by the relation
  \[A_n=\left\{x\in X\left\vert \norm{f_{n,x}-f}_2<\frac{1}{3}\varepsilon\right.\right\}.\]
  It is clear that $X=\bigcup_n A_n$. Fix $n\in\IN$ and suppose that $x\in A_n$, $gx\in A_n$
  and $g\in K$. Then we see that $\tilde f_{n,x,gx}(g)\geq \frac13 \tilde f(g)>0$. In particular, there is an $h\in U_{n,x}$
  with $k=hg^{-1}\in U_{n,gx}$, so
  \[
  \omega(g,x)=\omega(k^{-1}h,x)=\omega(k,gx)^{-1}\omega(h,x)\in L_n^{-1}L_n.
  \]
  This last set is precompact.

  \begin{step}
    There exist Borel maps $s:X\rightarrow G$ and $u:X\rightarrow Y$ such that for all $g\in G$ we have that
    \begin{align*}
      u(gx)&=\omega(g,x)u(x)\text{ for almost all }x\in X\\
      g&=s(gx)\beta(\omega(g,x),u(x))s(x)^{-1}\text{ for almost all }x\in X.
    \end{align*}
  \end{step}
  Fix measure class preserving
  Borel isomorphisms $\varphi:X\times H\rightarrow \Omega$ and $\psi:Y\times G\rightarrow \Omega$ as in definition \ref{def:MC}.
  We find Borel maps $u_0:X\times H\rightarrow Y$ and $s_0:X\times H\rightarrow G$ such that
  \[\varphi(x,h)=\psi\left(h\,u_0(x,h),s_0(x,h)\beta(h,u_0(x,h))^{-1}\right),\]
  for almost all $(x,h)\in X\times H$. For every $k\in H$, we see that
  \begin{align*}
    \varphi(x,kh)&=k\varphi(x,h)\\
    &=\psi\left(kh\,u_0(x,h),s_0(x,h)\beta(kh,u_0(x,h))^{-1}\right)\\
    \text{ and }\varphi(x,kh)&=\psi\left(kh\,u_0(x,kh),s_0(x,kh)\beta(kh,u_0(x,kh))^{-1}\right)
  \end{align*}
  for almost all $(x,h)\in X\times H$. It follows that $u_0$ and $s_0$ are invariant under the action of $H$
  and hence there are Borel functions $u:X\rightarrow Y$ and $s:X\rightarrow G$ such that $u_0(x,h)=u(x)$ and $s_0(x,h)=s(x)$
  almost everywhere. These maps $u,s$ satisfy
  \[\varphi(x,h)=\psi(h\,u(x),s(x)\beta(h,u(x))^{-1})\]
  for almost all $(x,h)\in X\times H$.

  For every $g\in G$, we compute that for almost all $(x,h)\in X\times H$,
  \begin{align*}
    g\varphi(x,h)&=\varphi(gx,h\omega(g,x)^{-1})\\
    &=\psi\left(h\omega(g,x)^{-1}u(gx),s(gx)\beta(h\omega(g,x)^{-1},u(gx))^{-1}\right)\\
    \text{ and }g\varphi(x,h)&=\psi\left(h\,u(x),g\,s(x)\beta(h,u(x))^{-1}\right)\\
  \end{align*}
  It follows that for all $g\in G$ we have that
  \begin{align*}
    u(gx)&=\omega(g,x)u(x)\text{ for almost all }x\in X\\
    gs(x)&=s(gx)\beta(\omega(g,x)^{-1},u(gx))^{-1}\\
  \end{align*}
  This finishes the proof of step 2.

  \begin{step}
    The cocycle $\omega$ is a proper cocycle.
  \end{step}
  Fix an increasing sequence $(K_n)_n$ of precompact open subsets of $G$ such that $G=\bigcup_n K_n$.
  For every $n\in\IN$, step 1 gives us an increasing sequence $A_{n,k}$ of Borel sets in $X$ with $X=\bigcup_k A_{n,k}$
  and such that for all $n,k\in\IN$, we get that the set
  \[\{\omega(g,x)\mid x,gx\in A_{n,k}, g\in K_n\}\]
  is precompact.

  Similarly, for an increasing sequence $(L_n)_n$ of precompact open subsets of $H$ with $H=\bigcup_n L_n$,
  we find similar increasing sequences of Borel sets $B_{n,k}$ in $Y$ such that $\bigcup_k B_{n,k}=Y$
  and for all $n,k\in\IN$, we get that
  \[\{\beta(h,y)\mid y,hy\in B_{n,k}, h\in L_n\}\]
  is precompact.

  By step 2, we find Borel maps $s:X\rightarrow G$ and $u:X\rightarrow Y$ such that for all $g\in G$
  we have that
  \begin{align*}
    u(gx)&=\omega(g,x)u(x)\text{ for almost all }x\in X\\
    g&=s(gx)\beta(\omega(g,x),u(x))s(x)^{-1}\text{ for almost all }x\in X.
  \end{align*}

  Consider the family $\cA$ of all Borel sets $A\subset X$ such that $s(A)$ is precompact and such that
  for every $n\in\IN$ there exists $k\in \IN$ such that $A\subset A_{n,k}$ and $u(A)\subset B_{n,k}$.
  By observation \ref{obs:union}, this is a large family. We show that $\omega$ is a proper cocycle with
  respect to $\cA$.

  To show condition (1) from definition \ref{def:propercocycle}, take a compact set $K\subset G$ and two sets $A,B\in\cA$.
  Then there is an $n\in\IN$ such that $K\subset K_n$. Moreover, there is a $k\in\IN$ such that $A,B\subset A_{n,k}$.
  Now we see that
  \[\{\omega(g,x)\mid x\in A, gx\in B, g\in K\}\subset \{\omega(g,x)\mid x,gx\in A_{n,k}, g\in K_n\},\]
  and the second set was assumed to be precompact.

  To show condition (2), let $L\subset H$ be a compact set and take $A,B\in\cA$.
  Then there is an $n\in\IN$ such that $L\subset L_n$, and there is a $k\in\IN$ with $u(A),u(B)\subset B_{n,k}$.
  Moreover, $s(A)$ and $s(B)$ are precompact. Let $g\in G$. For almost all $x\in X$ with $x\in A, gx\in B$ and $\omega(g,x)\in L$,
  we can make the following computation
  \[g=s(gx)\beta(\omega(g,x),u(x))s(x)^{-1}\in s(B)\left\{\beta(h,y)\left\vert y,hy\in B_{n,k}, h\in L\right.\right\}s(A)^{-1}.\]
  Observe that this last set is precompact and denote it by $K(L,A,B)\subset G$.
  Whenever the set \hbox{$\{x\in A\cap g^{-1}B\mid \omega(g,x)\in L\}$} is non-null, we see that $g\in K(L,A,B)$.
\end{proof}

\def\niets{
\section{Measure correspondences give rise to proper cocycles}
In this section, we show that measure correspondences give rise to proper cocycles.
The proof is based on recent work by Becker \cite{Becker:cocycles},
where he shows that every Borel cocycle is cohomologous to an essentially continuous cocycle.

\begin{definition}[{\cite[definition ...]{Becker:cocycles}}]
  \item A cocycle $\omega:G\times X\rightarrow H$ is essentially continuous if the following data exists.
    \begin{itemize}
    \item a Polish topology $\tau$ on $X$ that is compatible with the Borel structure of $X$ and such that
      the action $G\actson X$ is continuous with respect to $\tau$.
    \item a set $S\subset G\times X$
      such that $\omega\restrict S$ is continuous with respect to $\tau$ and for every $g\in G$, the set $S_g=\{x\in X\mid (g,x)\in S\}$
      has full measure in $X$.
    \end{itemize}
\end{definition}
In his paper, Becker calls this type of cocycle ``quintessentially continuous'', reserving the terminology
``essentially continuous'' for the case where the topology $\tau$ is given.

\def\niets{
\begin{definition}
  \label{def:MC}
  Let $G,H$ be two \lcsc groups and let $G\times H\actson (\Omega,\eta)$ be a non-singular action.
  We say that $\Omega$ is a measure correspondence if there exist standard measure spaces $(X,\mu)$ and
  $(Y,\nu)$ and (almost everywhere defined) measure class preserving Borel isomorphisms
  $\varphi:X\times H\rightarrow \Omega$ and $\psi:Y\times G\rightarrow \Omega$ such that for all $h\in H$
  we have that $\varphi(x,hk)=h\varphi(x,k)$ for almost all $(x,k)\in X\times H$,
  and such that for every $g\in G$, we have that $\psi(x,gh)=g\psi(x,h)$ for almost all $(x,h)\in Y\times G$.
\end{definition}

\begin{example}
  \label{ex:MC}
  Let $G\times H\actson (\Omega,\eta)$ be a measure correspondence of \lcsc groups, and consider measure preserving Borel isomorphisms $\varphi:X\times H\rightarrow \Omega$
  and $\psi:Y\times G\rightarrow\Omega$ as in definition \ref{def:MC}. Then there exist a unique non-singular action $G\actson (X,\mu)$ and cocycle $\omega:G\times X\rightarrow H$
  such that for every $g\in G$ we have that
  \[g\varphi(x,h)=\varphi(gx,h\omega(g,x)^{-1})\text{ for almost all }(x,h)\in X\times H.\]
  Similarly, we find a non-singular action $H\actson (Y,\nu)$ and a cocycle $\beta:H\times Y\rightarrow G$ such that for every $h\in H$, we get that
  \[h\psi(y,g)=\psi(hy,g\beta(h,y)^{-1})\text{ for almost all }(y,g)\in Y\times G.\]
  The these cocycles $\omega$ and $\beta$ are proper cocycles.
\end{example}

\begin{definition}
  \label{def:MEsub}
  Let $G,H$ be \lcsc groups. We say that $G$ is a measure equivalence subgroup of $H$ if there is a measure correspondence $(\Omega,\eta)$ between
  the two groups such that there is a $G$-invariant probability measure on $X$ in the class of $\mu$, where the action $G\actson (X,\mu)$ is as defined in example \ref{ex:MC}.
\end{definition}}

\begin{theorem}
  \label{thm:MC}
  Let $G,H$ be \lcsc groups and let $(\Omega,\eta)$ be a measure correspondence between $G$ and $H$.
  Consider the non-singular actions $G\actson (X,\mu)$
  and $H\actson (Y,\nu)$ associated with $\Omega$, together with the cocycles
  $\omega:G\times X\rightarrow H$ and $\beta:H\times Y\rightarrow G$.
  Then both $\omega$ and $\beta$ are proper cocycles.
\end{theorem}
\begin{proof}
  By symmetry, we only have to show that $\omega$ is a proper cocycle. By results of Mackey, we can assume
  that the actions $G\actson (X,\mu)$
  and $H\actson (Y,\nu)$ are everywhere defined Borel actions \cite{Mackey:actions}.
  Becker showed in \cite[corollary 1.4.8]{Becker:cocycles} that
  $\omega,\beta$ are cohomologous to cocycles that are essentially continuous.
  So by observation \ref{obs:cohom}, we can assume that $\omega$ and $\beta$ are essentially continuous.
  Moreover, by the Mackey cocycle theorem, we can assume that $\omega$ and $\beta$ are 
  strict Borel cocycles. This means that the cocycle relation
  $\omega(gh,x)=\omega(g,hx)\omega(h,x)$
  holds for all $g,h\in G$ and all $x\in X$ instead of almost all $x\in X$, and similarly for $\beta$.
  The cocycles $\omega,\beta$ remain essentially continuous, so we find the following data:
  \begin{itemize}
  \item Polish topologies on $X$ and $Y$ that are compatible with the Borel structures on $X$ and $Y$, and such that the actions $G\actson X$
    and $H\actson Y$ are continuous.
  \item Borel sets $S\subset G\times X$ and $T\subset H\times Y$ such that
    $\omega\restrict S$ and $\beta\restrict T$ are continuous and all the sets $S_g=\{x\in X\mid (g,x)\in S\}$ and $T_h=\{y\in Y\mid (h,y)\in T\}$
    with $g\in G$ and $h\in H$ have measure 1.
  \end{itemize}

  The proof of theorem \ref{thm:MC} proceeds in four steps.
  \begin{step}
    For every $\varepsilon>0$ and for every open neighborhood $e\in L\subset H$, there exists an open neighborhood $e\in U\subset G$
    and a Borel set $A\subset X$ with measure $\mu(A)>1-\varepsilon$ such that for every $g\in U$ we have that
    \[\omega(g,x)\subset L\text{ for almost all }x\in A.\]
  \end{step}
  Let $\varepsilon>0$ and fix any precompact open neighborhood $e\in L\subset H$. We know that $\omega(e,x)=e$ for all $x\in X$. By continuity of $\omega\restrict S$
  we see that for every $x\in X$ there are open neighborhoods $x\in W_x\subset X$ and $e\in V_x\subset G$ such that $\omega((V_x\times W_x)\cap S)\subset L$.
  The open sets $(W_x)_{x\in X}$
  cover $X$, and since $X$ is second countable, we find a countable subcover $(W_{x_n})_{n\in\IN}$. The $\sigma$-additivity of $\mu$ gives us
  a number $N$ such that the set $A=W_{x_1}\cup\ldots\cup W_{x_N}$ has measure $\mu(A)>1-\varepsilon$. Consider the open neighborhood $U=V_{x_1}\cap\ldots\cap V_{x_N}$ of identity in $G$.
  Then we see that for every $g\in U$ and almost every $x\in A$ we have that $\omega(g,x)\in L$.

  \begin{step}
    There exists a large family $\cA_0$ of Borel sets in $X$ such that for every pair $A,B\in\cA$ and every compact set $K\subset G$, the following holds.
    There is a precompact set $L\subset H$ such that for all $g\in K$ and all $x\in A$ with $gx\in B$ we have that $\omega(g,x)\in L$.
  \end{step}
  Observe that step 2 essentially states that condition (1) of definition \ref{def:propercocycle} holds for $\omega$, but
  $\omega(g,x)$ is in $L$ for all $x\in A\cap g^{-1}B$ instead of almost all. This is crucial for step 4.
  Fix a precompact open neighborhood $e\in L_0\subset H$.
  By step 1, we find an increasing sequence of Borel sets $(A_n)_n$ in $X$ and a decreasing sequence of open neighborhoods $e\in U_n\subset G$
  such that $\bigcup_n A_n$ has measure 1 and such that for all $n\in \IN$ and all $g\in U_n$ we have that $\omega(g,x)\in L_0$ for almost all $x\in A_n$.
  Consider the set $C_0$ of all $x\in\bigcup_n A_n$ such that for all $n\in\IN$ with $x\in A_n$, we have that $\omega(g,x)\in L$ for almost all $g\in U_n$, i.e.\@
  \[C_0=\{x\in \bigcup_n A_n \mid x\in A_n\Rightarrow \omega(g,x)\in L_0\text{ for almost all }g\in U_n\}.\]
  Observe that the set $C_0$ has measure $1$. For all $x\in X$, we consider the set $V_x\subset G$ that consists of all $g\in G$
  such that for all $n\in\IN$ with $x\in A_n$ and $g\in U_n$, we have that $\omega(g,x)\in L_0$. By definition of $C_0$, we see that
  $V_x$ has full measure in $G$ for all $x\in C_0$.

  Fix a countable dense subset $\{g_k\}_{k\in\IN}$ in $G$. We can assume that $g_1=e$. Consider the family $\cA$ of all Borel sets $A\subset X$
  that satisfy the following conditions for all $k\in\IN$
  \begin{itemize}
  \item $g_kA\subset C_0$
  \item $\omega(g_k,A)$ is precompact
  \item there exists $n\in\IN$ such that $\bigcup_{l=1}^k g_lA\subset A_n$.
  \end{itemize}
  We show that this family satisfies the conclusion of step 2. Let $A,B\in \cA$ and let $K\subset G$ be a compact set.
  It follows that $A\subset A_n$ for some $n\in\IN$. Moreover, we find $k\in\IN$ such that $K\subset \bigcup_{l=1}^kg_l^{-1}U_n$.
  There is a $m\in\IN$ such that $\bigcup_{l=1}^kg_lB\subset A_m$. Observe that the set $L=\bigcup_{l=1}^k \omega(g_l,A)^{-1}L_0^{-1}L_0$
  is precompact. Let $g\in K$ and $x\in A$ be such that $gx\in B$. Then we find some $l\leq k$ such that $g_lg\in U_n$. In particular,
  we see that $U_n(g_lg)^{-1}\cap U_m$ is a non-empty open set, so it has non-zero measure in $G$. In particular, we find $h_1\in V_x\cap U_n$
  and $h_2\in V_{g_lgx}\cap U_m$ such that $g=g_l^{-1}h_2^{-1}h_1$. We compute that
  \[\omega(g,x)=\omega(g_l^{-1}h_2^{-1}h_1,x)=\omega(g_l,gx)^{-1}\omega(h_2,g_lgx)^{-1}\omega(h_1,x)\in L.\]
  This finishes the proof of step 2.

  \begin{step}
    There exist Borel maps $s:X\rightarrow G$ and $u:X\rightarrow Y$ such that for all $g\in G$ we have that
    \begin{align*}
      u(gx)&=\omega(g,x)\text{ for almost all }x\in X
      g&=s(gx)^{-1}\beta(\omega(g,x),u(x))s(x)\text{ for almost all }x\in X.
    \end{align*}
    Moreover, for every set $V\subset Y$ with measure $1$, there is $h\in H$ such that $u^{-1}(h^{-1}V)$ has measure 1.
  \end{step}
  Fix measure class preserving
  Borel isomorphisms $\varphi:X\times H\rightarrow \Omega$ and $\psi:Y\times G\rightarrow \Omega$ as in definition \ref{def:MC}.
  Because $\psi$ commutes with the $H$-action, we find unique Borel maps $s:X\rightarrow G$ and $u:X\rightarrow Y$
  such that
  \[\psi(x,h)=h\varphi(u(x),s(x))\text{ for almost all }(x,h)\in X\times H.\]

  For every $g\in G$, we compute that for almost all $(x,h)\in X\times H$,
  \begin{align*}
    g\psi(x,h)&=\psi(gx,h\omega(g,x)^{-1})\\
    \Vert\qquad&\qquad\Vert\\
    h\varphi(u(x),gs(x))&=h\omega(g,x)^{-1}\varphi(u(gx),s(gx))\\
    \Vert\qquad&\qquad\Vert\\
    \varphi(hu(x),gs(x)\beta(h,u(x))^{-1})&=\varphi(h\omega(g,x)^{-1}u(gx),s(gx)\beta(h\omega(g,x)^{-1},u(gx))^{-1})
  \end{align*}
  It follows that for all $g\in G$ we have that for almost all $(x,h)\in X\times H$
  \begin{align*}
    u(gx)&=\omega(g,x)u(x)\text{ for almost all }x\in X\\
    gs(x)\beta(h,u(x))^{-1}&=s(gx)\beta(\omega(g,x)^{-1},u(gx))^{-1}\beta(h,\omega(g,x)^{-1}u(gx))^{-1}\\
    &=s(gx)\beta(\omega(g,x),u(x))\beta(h,u(x))^{-1}
  \end{align*}
  This finishes the proof of step 3.

  \begin{step}
    The cocycle $\omega$ is a proper cocycle.
  \end{step}
  By step 2, we find a large family $\cA_1$ of Borel sets in $X$ such that for every $A,B\in\cA_1$ and every compact $K\subset G$ we find a compact $L\subset H$
  such that for all $g\in K$ and $x\in A$ with $gx\in B$ we have that $\omega(g,x)\in L$. We find a similar large family $\cA_2$ of Borel sets in $Y$
  for the cocycle $\beta$. By step 3, we find Borel maps $s:X\rightarrow G$ and $u:X\rightarrow Y$ such that for all $g\in G$
  we have that
  \begin{align*}
    u(gx)&=\omega(g,x)\text{ for almost all }x\in X
    g&=s(gx)^{-1}\beta(\omega(g,x),u(x))s(x)\text{ for almost all }x\in X.
  \end{align*}
  Moreover, there is an $h_0\in H$ such that the family
  \[\cA_3=\{u^{-1}(h_0^{-1}A)\mid A\in\cA_2\}\]
  is a large family. Consider the large family of all Borel sets $A\subset X$ that satisfy the following conditions
  \begin{itemize}
  \item $A\in \cA_1$ and $A\in\cA_3$, so $h_0u(A)\in \cA_2$.
  \item $\beta(h_0,u(A))$ and $s(A)$ are precompact sets in $G$.
  \end{itemize}
  We show that $\omega$ is a proper cocycle with respect to this family $\cA$. Since $\cA$ is contained in $\cA_1$,
  it is clear that $\omega$ satisfies condition (1) from definition \ref{def:propercocycle}. To show that it also satisfies condition
  (2), let $A,B\in \cA$ and let $L\subset H$ be a compact set. By step 2, we find a compact set $K_0\subset G$ such that
  for all $h\in h_0Lh_0^{-1}$ and $y\in h_0u(A)$ with $hy\in h_0u(B)$, we have that $\beta(h,y)\in K_0$. Consider the precompact set
  $K=s(B)^{-1}\beta(h_0,u(B))^{-1}K_0\beta(h_0,A)s(A)$.
  Let now $g\in G$, we see that
  for almost all $x\in A$ with $gx\in B$ and $\omega(g,x)\in L$, we can make the following computation
  \begin{align*}
    g&=s(gx)^{-1}\beta(\omega(g,x),u(x))s(x)\\
    &=s(gx)^{-1}\beta(h_0,u(gx))^{-1}\beta(h_0\omega(g,x)h_0^{-1},h_0u(x))\beta(h_0,u(x))s(x)\\
    &\in K.
  \end{align*}
  So whenever the set $\{x\in A\cap g^{-1}B\mid \omega(g,x)\in L\}$ is non-null, we see that $g\in K$.
\end{proof}}

\section{Property A for pairs}
Jolissaint showed in \cite{Jolissaint:propercocycle} that, when $\omega:G\times X\rightarrow H$ is a proper cocycle, $(G,X)$ is an amenable pair and $H$ has the Haagerup property,
then $G$ has the Haagerup property. We will show that the same is true for property A and coarse embeddability instead of the Haagerup property. For property A
and coarse embeddability, we will be able to weaken the condition that $(G,X)$ is an amenable pair to the case where the pair $(G,X)$ has property A.
For the convenience of the reader, we also review the definition of an amenable pair, see \cite{Eymard:Means, Greenleaf:AmenableActions, Zimmer:AmenablePairs, Jolissaint:AmenablePairs}.
\begin{definition}
  \label{def:pairA}
  Let $G\actson (X,\mu)$ be a non-singular action of a \lcsc group on a standard probability space. Consider the Koopman representation
  $\pi:G\rightarrow U(\Lp^2(X,\mu))$.
  \begin{itemize}
  \item We say that the pair $(G,X)$ is amenable if there is a sequence of almost invariant unit vectors $\xi_n\in\Lp^2(X,\mu)$, i.e.\@
    \[\norm{\xi_n-\pi(g)\xi_n}\rightarrow 0\text{ uniformly on compact subsets of }G.\]
  \item Let $\cA$ be a large family of Borel subsets of $X$. We say that the pair $(G,X)$ has property A with respect to $\cA$ if
    for every compact set $K\subset G$ and every $\varepsilon>0$, there exists a continuous family $(\xi_g)_{g\in G}$ of unit vectors in $\Lp^2(X,\mu)$ such that
    \begin{itemize}
    \item $\norm{\xi_g-\xi_h}_2<\varepsilon$ whenever $g^{-1}h\in K$
    \item there is a set $A\in\cA$ such that each $\xi_g$ is supported in $gA$.
    \end{itemize}
  \end{itemize}
\end{definition}

The standard examples are the following. Suppose that $H\subset G$ is a closed normal subgroup and denote $Q=G/H$. Then the the pair $(G,Q)$ is an amenable pair if and only if
the group $Q$ is amenable. The group $Q$ has property A if and only if the pair $(G,Q)$ has property A with respect to the family of all precompact Borel sets in $Q$.

If $\mu$ is a $G$-invariant probability measure on $X$, then we see that the pair $(G,X)$ is amenable, because the constant function $1:X\rightarrow \IC$
is an invariant vector in $\Lp^2(X,\mu)$.
\begin{lemma}
  Let $G\actson (X,\mu)$ be a non-singular action of a \lcsc group on a standard probability space.
  If $(G,X)$ is an amenable pair, then $(G,X)$ has property A with respect to any large family $\cA$.
\end{lemma}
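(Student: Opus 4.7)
The plan is to produce the continuous family $(\xi_g)_{g\in G}$ by translating a single carefully chosen unit vector via the Koopman representation $\pi_X$. Fix a compact set $K\subset G$ and $\varepsilon>0$. Amenability of the pair $(G,X)$ furnishes a unit vector $\xi\in\Lp^2(X,\mu)$ satisfying $\norm{\pi_X(g)\xi-\xi}_2<\varepsilon/4$ for every $g\in K$.

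Next I would truncate $\xi$ to a vector supported in some element of $\cA$. Since $|\xi|^2\in\Lp^1(\mu)$ and $\mu$ is a finite measure, absolute continuity of the integral supplies an $\alpha>0$ such that $\int_C|\xi|^2\,d\mu<(\varepsilon/16)^2$ whenever $\mu(C)<\alpha$. Because $\cA$ is large, one can select $A\in\cA$ with $\mu(X\setminus A)<\alpha$, so that $\eta_0:=\chara_A\,\xi$ satisfies $\norm{\xi-\eta_0}_2<\varepsilon/16$. Setting $\eta:=\eta_0/\norm{\eta_0}_2$, the elementary bound $|\,\norm{\eta_0}_2-1\,|\leq\norm{\eta_0-\xi}_2$ yields $\norm{\eta-\xi}_2\leq 2\norm{\eta_0-\xi}_2<\varepsilon/8$.

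Define $\xi_g:=\pi_X(g)\eta$ for $g\in G$. The map $g\mapsto\xi_g$ is norm-continuous because the Koopman representation of a non-singular action on a standard probability space is strongly continuous. Since $\eta$ is supported in $A$, the formula $(\pi_X(g)\eta)(x)=\eta(g^{-1}x)\sqrt{\chi(g^{-1},x)}$ shows that $\xi_g$ is supported in $gA$, which gives the support condition in definition \ref{def:pairA}. Finally, for $g^{-1}h\in K$, unitarity of $\pi_X$ gives
\[\norm{\xi_g-\xi_h}_2=\norm{\eta-\pi_X(g^{-1}h)\eta}_2\leq 2\norm{\eta-\xi}_2+\norm{\xi-\pi_X(g^{-1}h)\xi}_2<\varepsilon/4+\varepsilon/4<\varepsilon.\]

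There is no real obstacle here; the argument is essentially bookkeeping of constants, relying on three standard ingredients: the almost-invariant-vector characterization of amenability of the pair, the absolute continuity of the integral applied to $|\xi|^2$ (which is where largeness of $\cA$ enters), and the strong continuity of the Koopman representation. The step that one has to be slightly careful with is the normalization $\eta_0\mapsto\eta$, but the bound $\norm{\eta-\xi}_2\leq 2\norm{\eta_0-\xi}_2$ makes this harmless.
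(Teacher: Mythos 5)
Your argument is correct and is essentially the paper's own proof: take an almost invariant vector from amenability of the pair, truncate it to a set in the large family, and translate by the Koopman representation. Your explicit normalization step (and the bound $\norm{\eta-\xi}_2\leq 2\norm{\eta_0-\xi}_2$) is a welcome bit of extra care that the paper glosses over when it calls the truncated vectors ``unit vectors''.
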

\begin{proof}
  Let $K\subset G$ be compact and let $\varepsilon>0$. Since $(G,X)$ is an amenable pair, we find a unit vector $\xi\in \Lp^2(X,\mu)$
  such that $\norm{\xi-\pi(g)\xi}<\frac\varepsilon3$ for all $g\in K$. Because $\cA$ is a large family, we find a set $A\in\cA$
  such that $\xi_0=\chara_A\xi$ satisfies $\norm{\xi_0-\xi}<\frac\varepsilon3$. Set $\xi_g=\pi(g)\xi_0$, then we see that $(\xi_g)_{g\in G}$ is a continuous
  family of unit vectors where every $\xi_g$ is supported in $gA$. Moreover, we compute that
  \[\norm{\xi_g-\xi_h}=\norm{\xi_0-\pi(g^{-1}h)\xi_0}\leq2\norm{\xi-\xi_0}+\norm{\xi-\pi(g^{-1}h)\xi}<\varepsilon\]
  for all $g,h\in G$ with $g^{-1}h\in K$.
\end{proof}
\def\niets{
 Roe's property A
for proper metric spaces with bounded geometry.

\begin{definition}[{\cite{R05}}]
  Let $(X,d)$ be a proper metric space, i.e.\@ all closed balls $\overline B(x,R)$ in $X$ are compact. In particular, $X$ is locally compact and $\sigma$-compact.
  Denote $\Prob(X)$ for the set of all Radon probability measures on $X$. We consider $\Prob(X)$ as a weak-$\ast$ compact convex subset of $C_0(X)^\ast$.
  \begin{itemize}
  \item The space $(X,d)$ has bounded geometry if there exists a radius $\varepsilon>0$ such that for all radii $R>0$ there is a natural number $N\in\IN$
    such for every $x\in X$, the ball $B(x,R)$ can be covered by $N$ balls of radius $\varepsilon$.
  \item A bounded geometry space $(X,d)$ is said to have property A if for every radius $R>0$ and for every $\varepsilon>0$ there is a weak-$\ast$ continuous
    map $\eta:X\rightarrow \Prob(X)$ such that $\norm{\eta_x-\eta_y}<\varepsilon$ whenever $d(x,y)< R$ and there is a radius $S>0$ such that each measure $\eta_x$
    is supported in $B(x,S)$.
  \end{itemize}
\end{definition}
For our purpose, the most important examples of proper metric spaces with bounded geometry come from \lcsc groups $G$. Remember that
there exists a proper left-invariant metric $d$ on $G$ such that $(G,d)$ has bounded geometry (see \cite{S74,HP06}). This metric is unique up to coarse equivalence.
It is easy to see that $G$ has property A as a \lcsc group if and only if $(G,d)$ has property A as a proper metric space (see \cite{DL:A}). Let $H\subset G$
be a closed subgroup. The metric $d$ on $G$ induces a $G$-invariant metric $d^\prime$ on the quotient space $G/H$. If the metric space $(G/H,d^\prime)$
has property A, the we will say that $H$ has property co-A in $G$. When $H$ is a normal subgroup, this is the case if and only if the quotient group $G/H$ has property A.

\begin{lemma}
  Let $(X,d)$ be a proper metric space with bounded geometry. Suppose that a \lcsc group $G$ acts on $X$ by isometries. If $(X,d)$ has Roe's
  property A, then the pair $(G,X)$ has property A with respect to the large family $\cA$ of all bounded Borel sets in $X$,
  for any quasi-invariant Radon probability measure $\mu$ on $X$.
\end{lemma}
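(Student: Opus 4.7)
The strategy is to turn Roe's weak-$*$ continuous probability-measure-valued map $\eta\colon X\to\Prob(X)$ into a continuous family of unit vectors in $\Lp^2(X,\mu)$ indexed by $G$, by evaluating it along the orbit $g\mapsto gx_0$ of a fixed basepoint $x_0$ and then smoothing each measure $\eta_{gx_0}$ against $\mu$ to obtain an $\Lp^1$ density whose square root lies in $\Lp^2(X,\mu)$.

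After replacing $X$ by $\supp\mu$ (which is $G$-invariant and closed by quasi-invariance of $\mu$, still proper and of bounded geometry as a closed subset, and still has Roe's property A by the hereditary property for bounded-geometry subspaces) we may assume $\mu$ has full support. Fix $x_0\in X$. Given a compact $K\subset G$ and $\varepsilon>0$, let $R=\sup_{k\in K}d(kx_0,x_0)$, which is finite because $Kx_0$ is compact. Applying Roe's property A with radius $R$ and tolerance $\varepsilon^2/4$ produces a weak-$*$ continuous $\eta\colon X\to\Prob(X)$ and an $S>0$ such that $\norm{\eta_x-\eta_y}_{TV}<\varepsilon^2/4$ whenever $d(x,y)<R$ and $\supp\eta_x\subset B(x,S)$.

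Choose $r>0$ and a continuous bump $\phi\colon[0,\infty)\to[0,\infty)$ supported in $[0,r)$ with $\phi(0)>0$, and set $C(y)=\int_X\phi(d(z,y))\,d\mu(z)$. Since $\mu$ has full support, $y\mapsto C(y)$ is continuous and strictly positive. The kernel $\kappa(z,y)=\phi(d(z,y))/C(y)$ then satisfies $\int_X\kappa(z,y)\,d\mu(z)=1$ for every $y$. Define
\[
f_x(z)=\int_X\kappa(z,y)\,d\eta_x(y),\qquad \xi_g(z)=\sqrt{f_{gx_0}(z)}\in\Lp^2(X,\mu).
\]
Then $\xi_g$ has unit norm in $\Lp^2(X,\mu)$ and is supported in $B(gx_0,S+r)=gB(x_0,S+r)=gA$, where $A=B(x_0,S+r)$ is bounded. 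If $g^{-1}h\in K$, the isometry of the action gives $d(gx_0,hx_0)\le R$, and a Fubini argument together with the normalisation of $\kappa$ yields
\[
\norm{f_{gx_0}-f_{hx_0}}_{\Lp^1(\mu)}\le\norm{\eta_{gx_0}-\eta_{hx_0}}_{TV}<\varepsilon^2/4,
\]
so the elementary bound $(\sqrt a-\sqrt b)^2\le\abs{a-b}$ gives $\norm{\xi_g-\xi_h}_2<\varepsilon$. Continuity of $g\mapsto\xi_g$ in $\Lp^2$ follows from continuity of $g\mapsto gx_0$ combined with the fact that weak-$*$ convergence of the uniformly bounded-support measures $\eta_{x_n}\to\eta_x$, integrated against the bounded continuous kernel $\kappa$, yields pointwise and then $\Lp^1$-convergence of $f_{x_n}\to f_x$ by dominated convergence, whence $\Lp^2$-convergence of the square roots.

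The main obstacle is the smoothing step: in order to convert the abstract probability measures $\eta_x$ into $\Lp^2(X,\mu)$-vectors while preserving the total-variation control given by Roe's property A, one must normalise $\kappa$ to integrate to $1$ against $d\mu$ at every point where $\eta_x$ concentrates. This is what forces the reduction to $\supp\mu$ and requires bounded geometry plus quasi-invariance of $\mu$ to obtain uniform positive lower bounds on $\mu(B(y,r))$ for $y$ in compact subsets of $\supp\mu$.
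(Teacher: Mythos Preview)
Your proposal is correct and follows essentially the same approach as the paper: both smooth the Roe measures $\eta_{gx_0}$ against a continuous kernel $\kappa(z,y)$ normalised so that $\int_X\kappa(z,y)\,d\mu(z)=1$, then take square roots to land in $\Lp^2(X,\mu)$. Your argument is in fact slightly more careful than the paper's, which simply asserts the existence of such a normalised kernel without noting that this requires $\mu$ to have full support; your explicit reduction to $\supp\mu$ fills that gap, and your use of the tolerance $\varepsilon^2/4$ together with $(\sqrt a-\sqrt b)^2\le|a-b|$ is cleaner than the paper's estimate, which tacitly identifies $\|\xi_g-\xi_h\|_2^2$ with $\|\xi_g^2-\xi_h^2\|_1$.
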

\begin{proof}
  In order to show that the pair $(G,X)$ has property A, let $K\subset G$ be a compact set and let $\varepsilon>0$.
  Fix a point $x_0\in X$. Because the action $G\actson X$ is continuous, we find $R>0$ such that $d(gx_0,x_0)<R$ for every $g\in K$.
  Since $(X,d)$ has Roe's property A, there is a weak-$\ast$ continuous map $\eta:X\rightarrow\Prob(X)$ such that $\norm{\eta_x-\eta_y}<\varepsilon$
  whenever $d(x,y)<R$ and such that there is a radius $S>0$ with the property that $\supp\eta_x\subset B(x,S)$ for all $x\in X$.

  Take any continuous function $f:X\times X\rightarrow \IR_+$ that satisfies the following two conditions:
  \begin{itemize}
  \item there is a radius $r>0$ such that $f(x,y)=0$ whenever $d(x,y)>r$.
  \item for every $y\in X$, we get that the integral $\int_X f(x,y)\D\mu(x)=1$.
  \end{itemize}
  For any $x\in X$, we write $f_x:X\rightarrow \IR_+$ for the continuous compactly supported function that is given by $f_x(y)=f(x,y)$.

  Define Borel functions $\xi_g:X\rightarrow \IR_+$ the relation that
  \[\xi_g(x)^2=\int_X f(x,y) \D\eta_{gx_0}(y)=\eta_{gx_0}(f_x)\text{ for all }g\in G\text{ and }x\in X.\]
  Once we show the following properties of the family $(\xi_g)_{g\in G}$, we will have shown that the pair $(G,X)$ has property A.
  \begin{enumerate}
  \item each $\xi_g$ is a unit vector in $\Lp^2(X,\mu)$
  \item each $\xi_g$ is supported in $gB(x_0,S+r)=B(gx_0,S+r)$.
  \item the map $g\mapsto\xi_g$ is a continuous map $\xi:G\rightarrow \Lp^2(X,\mu)$.
  \item $\norm{\xi_g-\xi_h}<\varepsilon$ for all $g,h\in G$ with $g^{-1}h\in K$.
  \end{enumerate}

  To show the first property, observe that
  \[\norm{\xi_g}_2^2=\int_X\int_X f(x,y)\D\eta_{gx_0}(y)\D\mu(x)=\int_X\int_X f(x,y)\D\mu(x)\D\eta_{gx_0}(y)=\int_X 1\D\eta_{gx_0}(y)=1\]
  for all $g\in G$.

  For the second property, let $g\in G$ and suppose that $x\in X$ is such that $\xi_g(x)\not=0$. Then we know that
  \[\emptyset\not=\supp f_x\cap\supp \eta_{gx_0}\subset B(x,r)\cap B(gx_0,S).\]
  It follows that $x\in B(gx_0,S+r)$.

  The third property follows from the following argument. Suppose that $(g_n)_{n\in\IN}$ is a sequence in $G$ that converges to $g$.
  Then we know that $g_nx_0$ converges to $gx_0$, and in particular, the sequence $(g_nx_0)_{n\in\IN}$ is bounded. So we find a radius $s>0$
  such that $g_nx_0\in B(gx_0,s)$ for all $n\in\IN$. Observe that the restricted function $f\restrict{B(gx_0,S+r+s)\times X}$ is continuous and compactly
  supported. So we find an $M>0$ such that $\norm{f_x}_\infty\leq M$ for all $x\in B(gx_0,S+r+s)$. Since the map $\eta:X\rightarrow \Prob(X)$
  is weak-$\ast$ continuous, we see that $\xi_{g_n}(x)^2=\eta_{g_nx_0}(f_x)$ tends to $\eta_{gx_0}(f_x)=\xi_g(x)^2$ pointwise.
  Moreover, for all $n\in\IN$ and $x\in X$, we have that $\xi_{g_n}(x)^2\leq M$. By Lebesgue's dominated convergence theorem, we see that $\norm{\xi_{g_n}-\xi_{g}}_2$
  converges to $0$ when $n\rightarrow\infty$. In other words, $g\mapsto\xi_g$ is continuous.

  In order to show the last condition, we we observe that for all $g,h\in G$ with $g^{-1}h\in K$ we have that $d(gx_0,hx_0)<R$. Let $\eta$ be any Radon measure on $X$
  such that both $\eta_{gx_0}$ and $\eta_{hx_0}$ are absolutely continuous with respect to $\eta$. Denote the Radon-Nikodym derivatives by $\varphi_g,\varphi_h:X\rightarrow\IR_+$.
  Then we see that
  \begin{align*}
    \norm{\xi_g-\xi_h}_2^2&=\int_X\abs{\int_X f(x,y)(\varphi_g(y)-\varphi_h(y)\D\eta(y)}\D\mu(x)\\
    &\leq \int_X\int_X f(x,y)\D\mu(x)\abs{\varphi_g(y)-\varphi_h(y)}\D\eta(y)\\
    &= \int_X\abs{\varphi_g(y)-\varphi_h(y)}\D\eta(y)\\
    &=\norm{\eta_{gx_0}-\eta_{hx_0}}<\varepsilon
  \end{align*}
\end{proof}
}

\section{Proof of the main results}
\label{sect:proofs}
This section is devoted the the proofs of theorems \ref{thm:amen:intro} and \ref{thm:main:intro}. Theorem \ref{thm:amen:intro}
is essentially the same as the main result of \cite{Jolissaint:permanence}, but we use a slightly weaker notion of proper cocycle. Although the proof of theorem \ref{thm:amen:intro}
is very similar to the proof of \cite{Jolissaint:permanence}, we include a proof to assure the reader that \cite{Jolissaint:permanence} remains valid for our weaker notion of proper cocycle.
This shows that the Haagerup property, weak amenability and the weak Haagerup property pass to measure equivalence subgroups.

\begin{theorem}[theorem \ref{thm:amen:intro}, see also {\cite{Jolissaint:permanence}}]
  \label{thm:amen:proof}
  Let $G\actson (X,\mu)$ be a non-singular action of a \lcsc group. Let $\omega:G\times X\rightarrow H$ be a proper cocycle with values in another \lcsc group $H$.
  Assume that the pair $(G,X)$ is amenable. If $H$ has the Haagerup property (respectively is weakly amenable respectively has the weak Haagerup property),
  then so has $G$. Moreover, the weak amenability and weak Haagerup constants satisfy $\Lambda_{WA}(G)\leq\Lambda_{WA}(H)$ and $\Lambda_{WH}(G)\leq\Lambda_{WH}(H)$.
\end{theorem}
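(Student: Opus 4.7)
My plan is to adapt Jolissaint's averaging construction to our slightly weaker notion of proper cocycle. Write $\chi:G\times X\rightarrow\IRpos$ for the Radon-Nikodym cocycle of $G\actson (X,\mu)$ and $\pi:G\rightarrow\cU(\Lp^2(X,\mu))$ for the Koopman representation. For a bounded continuous function $\varphi:H\rightarrow\IC$ of the relevant type and a unit vector $f\in\Lp^2(X,\mu)$, I define the lift
\[\tilde\varphi(g)=\int_X\varphi(\omega(g,x))\,f(gx)\,\overline{f(x)}\,\sqrt{\chi(g,x)}\,d\mu(x).\]
If $\varphi(g^{-1}h)=\langle\xi_g,\eta_h\rangle$ is a Herz-Schur representation (with $\xi=\eta$ when $\varphi$ is positive definite), set
\[\tilde\xi_g(x)=\xi_{\omega(g^{-1},x)^{-1}}\sqrt{\chi(g^{-1},x)}\,f(g^{-1}x),\]
and $\tilde\eta_h$ analogously. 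A change of variable $x=hy$ together with the cocycle identities $\omega(g^{-1},hy)\omega(h,y)=\omega(g^{-1}h,y)$ and $\chi(g^{-1},hy)\chi(h,y)=\chi(g^{-1}h,y)$ shows $\tilde\varphi(g^{-1}h)=\langle\tilde\xi_g,\tilde\eta_h\rangle$. The pointwise bound $\|\tilde\xi_g\|_{\Lp^2(X,\cH)}\leq\|\xi\|_\infty$ gives $\|\tilde\varphi\|_{HS}\leq\|\varphi\|_{HS}$; the choice $\xi=\eta$ preserves positive definiteness; and continuity of $\tilde\varphi$ follows from the automatic continuity of bounded Borel Herz-Schur multipliers (Haagerup-Knudby).

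Given a compact $K\subset G$ and $\varepsilon>0$, I use amenability of $(G,X)$ to pick a unit vector $f_0$ with $\|\pi(g)f_0-f_0\|_2$ sufficiently small on $K$, then use that $\cA$ is a large family to choose $A\in\cA$ so that $f=\chara_A f_0/\|\chara_A f_0\|_2$ is still approximately $\pi(K)$-invariant and supported in $A$. Condition (1) of properness produces a precompact $L_0=L(K,A,A)\subset H$ with $\omega(g,x)\in L_0$ for every $g\in K$ and almost every $x\in A\cap g^{-1}A$. I then pick $\varphi$ from the approximating sequence on $H$ so that $|\varphi(h)-1|$ is small on $\overline{L_0}$. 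Because $f$ is supported in $A$, the integrand defining $\tilde\varphi(g)$ vanishes outside $A\cap g^{-1}A$, hence
\[|\tilde\varphi(g)-\langle\pi(g)f,f\rangle|\leq\sup_{h\in\overline{L_0}}|\varphi(h)-1|,\]
and combining with approximate invariance of $f$ gives $|\tilde\varphi(g)-1|<\varepsilon$ uniformly on $K$.

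For the decay, if $\varphi$ is compactly supported in $L\subset H$ then $\tilde\varphi(g)\neq 0$ forces $\mu\{x\in A:gx\in A,\ \omega(g,x)\in L\}>0$, so condition (2) of properness gives $g\in K(L,A,A)$, precompact; this handles weak amenability. For $\varphi\in C_0(H)$ and $\delta>0$, put $L_\delta=\{h\in H:|\varphi(h)|\geq\delta/2\}$, precompact. Splitting the integral defining $\tilde\varphi(g)$ over $\{x:\omega(g,x)\in L_\delta\}$ and its complement, Cauchy-Schwarz bounds the second piece by $(\delta/2)\|\pi(g)f\|_2\|f\|_2=\delta/2$, so $|\tilde\varphi(g)|>\delta$ forces $\mu\{x\in A:gx\in A,\ \omega(g,x)\in L_\delta\}>0$ and hence $g\in K(L_\delta,A,A)$; this settles the Haagerup and weak Haagerup cases. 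The key observation that makes our weaker properness sufficient, and the only real point of departure from Jolissaint's argument, is that the support of $f$ automatically confines the integrand to the pair $A\cap g^{-1}A$, so condition (1) on $A\cap g^{-1}B$ (rather than Jolissaint's stronger demand that $\omega(K\times A)$ be precompact) is exactly what is needed. Once this is isolated, the rest is bookkeeping and routine Cauchy-Schwarz.
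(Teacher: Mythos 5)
Your proposal is correct and follows essentially the same route as the paper's proof: the same averaged multiplier $\tilde\varphi$ (the paper's formula $f(g)=\int_X\overline{\xi(x)}(\pi_g\xi)(x)f_0(\omega(g,g^{-1}x))\,d\mu(x)$ is yours after the change of variables $x\mapsto gx$, using $\omega(g^{-1},x)^{-1}=\omega(g,g^{-1}x)$), the same tensor-product Herz--Schur vectors, the same use of the support of the almost-invariant vector to confine the integrand to $A\cap g^{-1}A$ so that only condition (1) of properness on $A\cap g^{-1}B$ is needed, and the same appeal to condition (2) plus a triangle/Cauchy--Schwarz estimate for the decay. Your isolation of why the weaker properness suffices matches exactly the point the paper is making.
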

\begin{proof}
  Suppose that $H$ has the Haagerup property (respectively is weakly amenable with constant $\Lambda_{WA}(H)<\Lambda$, respectively has the
  weak Haagerup property with constant $\Lambda_{WH}(H)<\Lambda$). We have to show that $G$ has the Haagerup property (respectively
  is weakly amenable with constant $\Lambda_{WA}(H)<\Lambda$, respectively has the weak Haagerup property with constant $\Lambda_{WH}(H)<\Lambda$).

  Let $K\subset G$ be compact and let $\varepsilon>0$. Since $\omega$ is a proper cocycle, it is proper with respect to a large family $\cA$
  of Borel subsets of $X$. Denote the Koopman representation of $G\actson (X,\mu)$ by $\pi:G\rightarrow \cU(\Lp^2(X,\mu))$.
  Amenability of the pair $(G,X)$ gives a unit vector $\xi\in\Lp^2(X,\mu)$ such that $\norm{\xi-\pi_g\xi}_2<\varepsilon$ for all $g\in K$. We can assume that
  $\xi$ is supported in a set $A\in\cA$. Properness of the cocycle $\omega$ gives us a compact set $L\subset H$ such that for all $g\in K$ we have that
  $\omega(g,x)\in L$ for almost all $x\in A\cap g^{-1}A$. We find a continuous function $f_0:H\rightarrow \IC$ such that
  \begin{itemize}
  \item $\abs{f_0(h)-1}<\varepsilon$ for all $h\in L$.
  \item (in the case of the Haagerup property) $f_0$ is of positive type.
  \item (in the case of weak amenability and the weak Haagerup property) $f_0$ is a Herz-Schur multiplier with norm $\norm{f_0}_{HS}\leq\Lambda$.
  \item (in the case of the Haagerup property and the weak Haagerup property) $f_0$ is a $C_0$ function.
  \item (in the case of weak amenability) $f_0$ is compactly supported.
  \end{itemize}

  Define a Borel function $f:G\rightarrow \IC$ by the formula
  \[f(g)=\int_X \overline{\xi(x)}\,(\pi_g\xi)(x)\,f_0(\omega(g,g^{-1}x))\D\mu(x)\text{ for all }g\in G.\]
  We have to show that $f$ satisfies the following properties.
  \begin{itemize}
  \item $\abs{f(g)-1}<2\varepsilon$ for all $g\in K$.
  \item (in the case of the Haagerup property) $f$ is of positive type.
  \item (in the case of weak amenability and the weak Haagerup property) $f$ is a Herz-Schur multiplier with norm $\norm{f}_{HS}\leq\Lambda$.
  \item $f$ is continuous.
  \item (in the case of the Haagerup property and the weak Haagerup property) $f$ is a $C_0$ function.
  \item (in the case of weak amenability) $f$ is compactly supported.
  \end{itemize}

  The first property follows from the following computation:
  \begin{align*}
    \abs{f(g)-1}&=\abs{\int_X \overline{\xi(x)}\,(\pi_g\xi)(x)\,f_0(\omega(g,g^{-1}x))\D\mu(x)-1}\\
    &\leq \int_X\abs{\xi(x)}\,\abs{(\pi_g\xi)(x)}\,\abs{f_0(\omega(g,g^{-1}x))-1}\D\mu(x)+\abs{\inprod{\xi}{\pi_g\xi}-1}
  \end{align*}
  When $g\in K$, we have chosen $\xi$ such that $\abs{\inprod{\xi}{\pi_g\xi}-1}<\varepsilon$. In the same case, for almost all $x\in X$
  with $\xi(x)\not=0$ and $(\pi_g\xi)(x)\not=0$, we have that $x\in A$ and $g^{-1}x\in A$ and hence that $\omega(g,g^{-1}x)\in L$.
  It follows that $\abs{f_0(\omega(g,g^{-1}x))-1}<\varepsilon$, so we also see that
  \[\int_X\abs{\xi(x)}\,\abs{(\pi_g\xi)(x)}\,\abs{f_0(\omega(g,g^{-1}x))-1}\D\mu(x)<\inprod{\abs{\xi}}{\pi_g\abs{\xi}}\varepsilon\leq\varepsilon.\]
  This implies that $\abs{f(g)-1}<2\varepsilon$.

  Observe that $f$ is of positive type if and only if there is a separable Hilbert space $\cH$ and a bounded continuous function
  $\eta:G\rightarrow \cH$ such that $f(g^{-1}h)=\inprod{\eta_g}{\eta_h}$. So conditions 2 and 3 follow from the property below.
  Moreover, continuity of $f$ then follows from \cite[Appendix A]{Haagerup:CBAP} (see also \cite[lemma C.1]{Knudby:WH}).
  \begin{itemize}
  \item If $\cH_0$ is a Hilbert space and $\eta^0,\zeta^0:G\rightarrow \cH_0$ are bounded continuous functions such that $f_0(g^{-1}h)=\inprod{\eta^0_g}{\zeta^0_h}$
    for all $g,h\in H$. Consider $\cH=\Lp^2(X,\mu)\otimes \cH_0$ and define $\eta,\zeta:G\rightarrow \cH$ by the formula
    \[\eta_g(x)=(\pi_g\xi)(x)\eta^0_{\omega(g,g^{-1}x)}\text{ and }\zeta_g(x)=(\pi_g\xi)(x)\zeta^0_{\omega(g,g^{-1}x)}\]
    for all $g\in G$ and $x\in X$. Then it follows that $f(g^{-1}h)=\inprod{\eta_g}{\zeta_h}$ for all $g,h\in G$ and moreover
    we have that $\norm{\eta}_{\infty}=\norm{\eta^0}_\infty$ and $\norm{\zeta}_{\infty}=\norm{\zeta^0}_\infty$.
  \end{itemize}
  We now prove this property. Observe that, for all $g,h\in G$ and almost all $x\in X$, we have that
  \[\omega(g^{-1}h,h^{-1}gx)=\omega(g,g^{-1}gx)^{-1}\omega(h,h^{-1}gx),\]
  and hence we compute that
  \begin{align*}
    f(g^{-1}h)&=\int_X\overline{\xi(x)}(\pi_{g^{-1}h}\xi)(x)f_0(\omega(g^{-1}h,h^{-1}gx))\D\mu(x)\\
    &=\int_X\overline{\xi(x)}(\pi_{g^{-1}h}\xi)(x)f_0(\omega(g,g^{-1}gx)^{-1}\omega(h,h^{-1}gx))\D\mu(x)\\
    &=\int_X\overline{(\pi_g\xi)(x)}(\pi_h\xi)(x)f_0(\omega(g,g^{-1}x)^{-1}\omega(h,h^{-1}x))\D\mu(x)\\
    &=\int_X\overline{(\pi_g\xi)(x)}(\pi_h\xi)(x)\inprod{\eta^0_{\omega(g,g^{-1}x)}}{\zeta^0_{\omega(h,h^{-1}x)}}\D\mu(x)\\
    &=\inprod{\eta_g}{\zeta_h}.
  \end{align*}
  For every $g\in G$, the norm of $\eta_g$ can be computed as follows:
  \begin{align*}
    \norm{\eta_g}^2&=\int_X\abs{(\pi_g\xi)(x)}^2\norm{\eta^0_{\omega(g,g^{-1}x)}}^2\D\mu(x)\\
    &\leq \norm{\pi_g\xi}_2^2\norm{\eta^0}_\infty^2=\norm{\eta^0}_\infty.
  \end{align*}

  The last two properties follow from the following argument. Let $\delta>0$ in the case of the 5th property and let $\delta=0$
  in the case of the last property. We know that $L_0=\{h\in H\mid \abs{f_0(h)}>\delta\}$ is precompact, and we have to show that
  $K_0=\{g\in G\mid \abs{f(g)}>\delta\}$ is precompact.
  Since $\omega$ is a proper cocycle, we find a precompact set $K_1$ such that $g\in K_1$ whenever the set
  \[\{x\in X\mid x\in A, gx\in A\text{ and }\omega(g,x)\in L_0\}\]
  is non-null. Suppose that $\abs{f(g)}>\delta$, then there is a non-null set of $x\in X$ such that $\xi(x)\not=0$,
  $(\pi_g\xi)(x)\not=0$ and $f_0(\omega(g,g^{-1}x))>\delta$. All these $x\in X$ satisfy $x\in A$, $g^{-1}x\in A$ and $\omega(g,g^{-1}x)\in L_0$.
  It follows that $g\in K_1$.
\end{proof}

\begin{theorem}[theorem \ref{thm:main:intro}]
  \label{thm:main:proof}
  Let $G\actson (X,\mu)$ be a non-singular action of a \lcsc group on a standard probability space, and let $\cA$ be a large family such that
  the pair $(G,X)$ has property A with respect to $\cA$. Let $\omega:G\times X\rightarrow H$ be a proper cocycle with respect to the same family $\cA$.
  If $H$ is a \lcsc group with property A (respectively is coarsely embeddable), then $G$ has property A (respectively, is coarsely embeddable).
\end{theorem}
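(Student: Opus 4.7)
The plan is to follow the template used for theorem \ref{thm:amen:proof}, constructing a positive definite kernel on $G\times G$ from a positive definite kernel on $H\times H$ together with the family $(\xi_g)_{g\in G}$ supplied by property A of the pair $(G,X)$. The proposition at the end of section \ref{sect:defs} allows me to work with Borel positive definite kernels, so I will not have to verify continuity; theorem \ref{thm:CEkern} lets me treat coarse embeddability of $H$ also via a positive definite kernel rather than a conditionally negative definite one. Thus both cases reduce to exhibiting, for each compact $K\subset G$ and each $\varepsilon>0$, a positive definite kernel on $G\times G$ that is within $2\varepsilon$ of $1$ on the tube $g^{-1}h\in K$ and whose "non-smallness set'' has the right precompactness property.

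Fix such $K$ and $\varepsilon$. Property A of $(G,X)$ provides a continuous family $(\xi_g)_{g\in G}$ of unit vectors in $\Lp^2(X,\mu)$, each supported in $gA$ for a common $A\in\cA$, with $\norm{\xi_g-\xi_h}_2<\varepsilon$ whenever $g^{-1}h\in K$. Properness condition (1) of definition \ref{def:propercocycle}, applied to $K$ and the pair $A,A$, yields a precompact $L\subset H$ such that $\omega(g_1,y)\in L$ for almost every $y\in A\cap g_1^{-1}A$ and every $g_1\in K$. Using property A (resp.\ coarse embeddability) of $H$, I obtain a positive definite kernel $k_0\colon H\times H\to[0,1]$ with $k_0(h,h)=1$, $\abs{k_0(h_1,h_2)-1}<\varepsilon$ whenever $h_1^{-1}h_2\in L$, and the appropriate support condition. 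I then set
\[
  k(g,h)=\int_X\overline{\xi_g(x)}\,\xi_h(x)\,k_0\bigl(\omega(g,g^{-1}x),\omega(h,h^{-1}x)\bigr)\,\D\mu(x).
\]
Writing $k_0(h_1,h_2)=\inprod{v_{h_1}}{v_{h_2}}_{\cH}$ via a Kolmogorov dilation, the assignment $\eta_g(x)=\xi_g(x)v_{\omega(g,g^{-1}x)}\in\Lp^2(X,\mu)\otimes\cH$ realises $k(g,h)=\inprod{\eta_g}{\eta_h}$, so $k$ is positive definite.

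For the approximation on the tube, the cocycle identity gives $\omega(g,g^{-1}x)^{-1}\omega(h,h^{-1}x)=\omega(g^{-1}h,h^{-1}x)$; when $g^{-1}h\in K$ and $x\in gA\cap hA$, the point $y=h^{-1}x$ lies in $A\cap(g^{-1}h)^{-1}A$, so the right-hand side lies in $L$ by the choice of $L$, and hence $\abs{k_0(\omega(g,g^{-1}x),\omega(h,h^{-1}x))-1}<\varepsilon$ almost everywhere on the support of $\overline{\xi_g}\,\xi_h$. Combined with $\abs{\inprod{\xi_g}{\xi_h}-1}\leq\norm{\xi_g-\xi_h}_2<\varepsilon$, a triangle inequality gives $\abs{k(g,h)-1}<2\varepsilon$.

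The support condition is where the two cases split. For property A, the set $L_0=\{h_1^{-1}h_2\mid k_0(h_1,h_2)\neq 0\}$ is precompact; if $k(g,h)\neq 0$ then $\omega(g^{-1}h,h^{-1}x)\in L_0$ on a positive-measure subset of $gA\cap hA$, and properness condition (2) confines $g^{-1}h$ to the precompact set $K(L_0,A,A)$. For coarse embeddability, given $\delta>0$, I first choose a precompact $L_\delta\subset H$ such that $k_0(h_1,h_2)<\delta/2$ off $L_\delta$; splitting the integral defining $k$ according to whether $\omega(g^{-1}h,h^{-1}x)$ lies in $L_\delta$ or not, the outside piece contributes at most $\delta/2$ (using $\abs{k_0}\leq 1$ together with $\norm{\xi_g}_2=\norm{\xi_h}_2=1$), so $\abs{k(g,h)}>\delta$ forces the inside piece to have positive mass, and properness condition (2) again places $g^{-1}h\in K(L_\delta,A,A)$. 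I expect this coarse-embeddability split to be the only genuinely new subtlety; everything else is a direct analogue of the argument for theorem \ref{thm:amen:proof}, and the normalisation $0\leq k_0\leq 1$ (obtained for coarse embeddability via Schoenberg as in the proof of theorem \ref{thm:CEkern}) is what makes the split estimate clean.
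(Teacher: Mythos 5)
Your proposal is correct and follows essentially the same route as the paper: the same kernel $k(g,h)=\int_X\overline{\xi_g(x)}\xi_h(x)k_0(\omega(g,g^{-1}x),\omega(h,h^{-1}x))\,\D\mu(x)$, the same Kolmogorov-dilation argument for positive definiteness, the same cocycle identity $\omega(g,g^{-1}x)^{-1}\omega(h,h^{-1}x)=\omega(g^{-1}h,h^{-1}x)$ for the tube estimate, and the same appeal to properness condition (2) for the support/decay conditions. The only cosmetic difference is your $\delta/2$ splitting in the coarse-embeddability case; the paper handles $\delta=0$ and $\delta>0$ uniformly by noting that $\abs{k(g,h)}>\delta$ already forces $\abs{k_0}>\delta$ on a non-null set, since $\int\abs{\xi_g}\,\abs{\xi_h}\,\D\mu\leq 1$.
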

\begin{proof}
  Denote the Koopman representation by $\pi:G\rightarrow U(\Lp^2(X,\mu))$. Suppose that $H$ has property A (resp.\@ is coarsely embeddable). We show that
  $G$ has property A (resp.\@ is coarsely embeddable). Let $K\subset G$ be a compact subset and let $\varepsilon>0$.

  Since the pair $(G,X)$ has property A, we find a continuous family $(\xi_g)_{g\in G}$ of unit vectors in $\Lp^2(X,\mu)$ such that
  \begin{itemize}
  \item $\norm{\xi_g-\xi_h}<\frac\varepsilon2$ whenever $g^{-1}h\in K$.
  \item there is a set $A\in \cA$ such that every $\xi_g$ is supported in $gA$.
  \end{itemize}
  
  Properness of the cocycle $\omega$ gives us a compact set $L\subset H$ such that for all $g\in K$ and almost all $x\in A\cap g^{-1}A$ we get that
  $\omega(g,x)\in L$.

  Because the group $H$ has property A (resp.\@ is coarsely embeddable), we find a continuous positive definite kernel $k_0:H\times H\rightarrow \IC$ such that
  \begin{itemize}
  \item $\abs{k_0(g,h)-1}<\frac\varepsilon2$ whenever $g^{-1}h\in L$
  \item (in the property A case) the set $\{g^{-1}h\mid k_0(g,h)\not=0\}$ is precompact in $H$
  \item (in the coarse embeddability case) for all $\delta>0$ we have that the set\\
    $\{g^{-1}h\mid \abs{k_0(g,h)}>\delta\}$ is precompact.
  \end{itemize}

  Define a kernel $k:G\times G\rightarrow \IC$ by the formula
  \[k(g,h)=\int_X\overline{\xi_g(x)}\xi_h(x)k_0(\omega(g,g^{-1}x),\omega(h,h^{-1}x))\D\mu(x).\]
  We show that this kernel satisfies the conditions of definition \ref{def:A} (resp.\@ definition \ref{def:CE}):
  \begin{enumerate}
  \item $k$ is a positive definite kernel
  \item $\abs{k(g,h)-1}<\frac\varepsilon2$ whenever $g^{-1}h\in K$
  \item (in the property A case) the set $\{g^{-1}h\mid k(g,h)\not=0\}$ is precompact in $G$
  \item (in the coarse embeddability case) for all $\delta>0$ we have that the set\\
    $\{g^{-1}h\mid \abs{k(g,h)}>\delta\}$ is precompact.
  \end{enumerate}
  
  To prove (1), let $g_1,\ldots,g_n\in G$ and let $c_1,\ldots,c_n\in \IC$. Then we see that
  \begin{align*}
    \sum_{i,j=1}^n\overline c_i c_jk(g_i,g_j)=\int_X\sum_{i,j=1}^n\overline{\xi_{g_i}(x)c_i}\,\xi_{g_j}(x)c_j\,k_0(\omega(g_i,g_i^{-1}x),\omega(g_j,g_j^{-1}x))\D\mu(x).
  \end{align*}
  The integrand in the right hand side is positive for every $x\in X$ separately, because $k_0$ is a positive definite kernel.

  To prove (2), let $g,h\in G$ be such that $g^{-1}h\in K$. Then we see that, for almost all $x\in X$,
  \[\omega(g,g^{-1}x)^{-1}\omega(h,h^{-1}x)=\omega(g^{-1},hh^{-1}x)\omega(h,h^{-1}x)=\omega(g^{-1}h,h^{-1}x).\]
  Moreover, when $\xi_g(x)\not=0\not=\xi_h(x)$, then we get that both $h^{-1}x$ and $g^{-1}h h^{-1}x=g^{-1}x$ are elements
  of $A$. It follows that $\omega(g^{-1}h,h^{-1}x)\in L$ for almost all $x\in X$ with $\xi_g(x)\not=0\not=\xi_h(x)$.
  In particular, we see that for almost all such $x\in X$, $\abs{k_0(\omega(g,g^{-1}x),\omega(h,h^{-1}x))-1}<\frac\varepsilon2$.
  We compute that
  \begin{align*}
    \abs{k(g,h)-1}&\leq \int_X\abs{\xi_g(x)}\,\abs{\xi_h(x)}\,\abs{k_0(\omega(g,g^{-1}x),\omega(h,h^{-1}x))-1}\D\mu(x)\\
    &\qquad+\abs{\int_X\xi_g(x)\xi_h(x)\D\mu(x)-1}\\
    &\leq \frac\varepsilon2\inprod{\abs{\xi_g}}{\abs{\xi_h}}+\abs{\inprod{\xi_g}{\xi_h}-1}\\
    &\leq \frac\varepsilon2\norm{\xi_g}\norm{\xi_h} + \norm{\xi_g-\xi_h}\norm{\xi_h}\\
    &<\varepsilon.
  \end{align*}

  In order to prove (3) and (4) we prove the following assertion
  \begin{itemize}
  \item Let $\delta\geq0$. If $\{g^{-1}h\in H\mid \abs{k_0(g,h)}>\delta\}$ is precompact, then also\\
    $\{g^{-1}h\in G\mid \abs{k(g,h)}>\delta\}$
    is precompact.
  \end{itemize}
  Statement (3) follows by setting $\delta=0$. Assume that $\widetilde L=\{g^{-1}h\in H\mid \abs{k_0(g,h)}>\delta\}$ is precompact.
  Since $\omega$ is a proper cocycle with respect to $\cA$, we find that
  \[\widetilde K=\left\{g\in G\bigg\vert\mu\{x\in A\cap g^{-1}A\mid \omega(g,x)\in\widetilde L\}>0\right\}\]
  is precompact. Suppose that $g,h\in G$ are such that $\abs{k(g,h)}>\delta$. It follows that there is a non-null Borel set $B\subset X$
  such that $\abs{k_0(\omega(g,g^{-1}x),\omega(h,h^{-1}x))}>\delta$ for all $x\in B$. It follows that $\omega(g^{-1}h,h^{-1}x)\in\widetilde L$ for all $x\in B$,
  and hence that $g^{-1}h\in \widetilde K$.
\end{proof}


\bibliography{references,bibliography}{}
\bibliographystyle{sdpalpha}
\end{document}